\def\Rb{\mathbb{R}}		
\def\Cb{\mathbb{C}}  
\newcommand{\Zb}{\mathbb{Z}}
\newcommand{\Nb}{\mathbb{N}}
\newcommand{\restr}[1]
   {\vrule height1ex width.4pt
depth1.4ex\lower1.4ex\hbox{\scriptsize $\,#1$}}
\newtheorem{theorem}{Theorem}
\newtheorem{corollary}{Corollary}
\newtheorem{proposition}{Proposition}
\newtheorem{lemma}{Lemma}
\theoremstyle{definition}
\newtheorem{remark}{Remark}
\author{Inês Cruz\thanks{Centro de Matem\'atica da Universidade do Porto (CMUP), Departamento de Matem\'atica,  Faculdade de Ci\^encias da Universidade do Porto, R. Campo Alegre, 687, 4169-007 Porto, Portugal.},\,\, Helena Mena-Matos\footnotemark[1]\,\, and M. Esmeralda Sousa-Dias\thanks{Center for Mathematical Analysis, Geometry and Dynamical Systems (CAMGSD),
Departamento de Matem\'atica, Instituto Superior T\'ecnico, Av. Rovisco Pais, 1049-001 Lisboa, Portugal.}}
\begin{document}
\title{Dynamics of the birational maps arising from\\
 $F_0$ and $dP_3$ quivers }
\maketitle

\begin{abstract}
 The dynamics of the maps associated to   $F_0$ and $dP_3$ quivers is studied in  detail. We  show that  the corresponding reduced symplectic maps are conjugate to globally periodic maps by providing   explicit conjugations. The dynamics  in $\Rb^N_+$ of the original maps is obtained by lifting the dynamics of  these globally periodic maps  and the  solution of the discrete dynamical systems generated by each  map is given. 
A better understanding of the dynamics is achieved by considering first integrals. The relationship between the complete integrability of the  globally periodic maps and the dynamics of the original maps is explored.

\end{abstract}
 
\medskip

\noindent {\it MSC 2010:} Primary: 39A20; Secondary: 37E15, 37J35.\\
{\it Keywords:}   Difference equations, globally periodic map, periodic points, first integrals.

\section{Introduction}

We study the dynamics of the discrete dynamical systems (DDS)  given by the iterates of the following maps
\begin{align}
\varphi(x_1,x_2,x_3,x_4)&=\left(x_3,x_4,\frac{x_2^2+x_3^2}{x_1}, \frac{x_1^2x_4^2+ (x_2^2+x_3^2)^2}{x_1^2x_2}\right)\tag{$F_0$}\label{eq:H0}\\
\varphi(x_1,x_2,\ldots,x_6)&=\left(x_3,x_4,x_5,x_6, \frac{x_2x_4+x_3x_5}{x_1}, \frac{x_1x_4x_6+x_2x_4x_5+x_3x_5^2}{x_1x_2}\right) \tag{$dP_3$}\label{eq:dP3}
\end{align}
 in  $\Rb^4_+$ and $\Rb^6_+$, respectively. 

These maps arise in the context of  the theory of cluster algebras  associated to quivers (oriented graphs) satisfying a mutation-periodicity property. In particular, the maps \eqref{eq:H0} and \eqref{eq:dP3}  are associated to  quivers appearing in quiver gauge theories associated to the complex cones over the zero-th Hirzebruch  and the del Pezzo 3 surfaces respectively (see \cite{FengHa}, \cite{FoMa} and \cite{InEs2}). For this reason,  the quivers  in Figure~2 of \cite{InEs2}  and  their  associated maps will be called  $F_0$ and $dP_3$. These  $F_0$ and $dP_3$ quivers have 4 and 6 nodes respectively  and their mutation-period is   equal to 2. We refer to \cite{InEs2} for the construction of the maps \eqref{eq:H0} and \eqref{eq:dP3} from the respective quivers.

As shown in \cite{FoMa} any  mutation-periodic quiver   gives rise to a birational map $\varphi$ whose iterates define a system of $k$ difference equations, where $k$ is the (mutation) period of the quiver.  Moreover any quiver with $N$ nodes is represented by an $N\times N$ skew-symmetric matrix which defines a log presymplectic form $\omega$. This presymplectic form is invariant under the associated map $\varphi$ (i.e. $\varphi^*\omega  =\omega$) if and only if the quiver is mutation-periodic  \cite{InEs2}.

Our study of the dynamics of the maps \eqref{eq:H0} and \eqref{eq:dP3}  relies on a result obtained  by the first and last authors in \cite{InEs2}. Notably, any birational map  associated to a mutation-periodic quiver of period $k$ is semiconjugate to a symplectic map $\hat{\varphi}:\Rb_+^p\rightarrow \Rb_+^p$ where $p$ is the rank of the matrix  representing the quiver.  As the matrices representing the $F_0$ and $dP_3$ quivers  have rank equal  to 2,  the reduced maps  of \eqref{eq:H0} and \eqref{eq:dP3} are 2-dimensional. In  \cite{InEs2}, using one of the approaches developed there,  reduced maps  of  \eqref{eq:H0} and of \eqref{eq:dP3} preserving the symplectic form $\omega_0=\frac{dx\wedge dy}{x y}$ were obtained.

The maps \eqref{eq:H0} and \eqref{eq:dP3}  have a rather complicated expression and it seems  that  there is no simple approach to their dynamics. As the reduced maps $\hat{\varphi}$ are 2-dimensional it is then natural to begin with the  study of the dynamics of these  maps.
  We prove that the maps $\hat{\varphi}$ are globally periodic  by  exhibiting  explicit conjugations between  them and two simple globally periodic maps $\psi$ (cf. Theorem~\ref{dynred}).  As a consequence, the original maps  $\varphi$ in \eqref{eq:H0} and \eqref{eq:dP3} are semiconjugate  to the globally periodic maps $\psi$ and their dynamics  can be studied by lifting the dynamics of the respective map $\psi$.

The maps $\psi$  corresponding to  the maps  \eqref{eq:H0} and \eqref{eq:dP3}, only have a fixed point  and any other point has period $4$ and period  $6$, respectively. We show in  theorems \ref{orbits}, \ref{dymH0} and \ref{dymdP3}  that  the fixed point  of  $\psi$ lifts to  an algebraic variety invariant under $\varphi$ and that the lift of any $m$-periodic point,   with $m>1$, gives rise to $m$ algebraic varieties of codimension 2, which  are mapped cyclically one into the other by the map $\varphi$ and are invariant under the map $\varphi^{(m)}$. Moreover, Theorem~\ref{dymH0} and Theorem~\ref{dymdP3} also provide  the explicit solution of the DDS generated by the maps \eqref{eq:H0} and \eqref{eq:dP3}.   In the case of the map \eqref{eq:dP3} we are even able to  further  confine the orbits to algebraic varieties of codimension 4 by  using first integrals (cf. Proposition~\ref{prop4} and Corollary~\ref{cor1}).

The key property behind the study of the dynamics of the maps  \eqref{eq:H0} and \eqref{eq:dP3}  is the  global periodicity of  their reduced maps. It is well known that globally periodic maps are completely integrable  and   independent first integrals can be obtained  using, for instance,  the techniques described in \cite{Cima1}.  This allows us to  produce independent sets of  (lifted) first integrals of \eqref{eq:H0} and of \eqref{eq:dP3}.  It seems natural to relate the invariant varieties  already obtained to the common level sets of the lifted first integrals. This is done in Proposition~\ref{SSigma} for a particular choice of lifted first integrals of the map \eqref{eq:H0}.

The organization of the paper is as follows. In Section~\ref{sec1} we provide the necessary background and collect some results obtained in \cite{InEs2} relevant to the study to be performed. The following section is devoted to the dynamics of the reduced maps where we show that they are globally periodic. In Section~\ref{sec4} we  completely describe the dynamics of the maps \eqref{eq:H0} and \eqref{eq:dP3} by lifting the dynamics of   the corresponding globally periodic maps. The last section is devoted to the existence of first integrals  of \eqref{eq:H0} and \eqref{eq:dP3}. First,  we use first integrals to obtain a  more specific  description of the dynamics of the map \eqref{eq:dP3}. Second, we explore the 
relationship between the complete integrability of  globally periodic  maps and the   study  of the dynamics of \eqref{eq:H0} and \eqref{eq:dP3} performed in the previous sections.

\section{General setting}
\label{sec1}

Here we collect some facts and background relevant to study the dynamics of the maps $\varphi$ given by \eqref{eq:H0} and \eqref{eq:dP3}. Throughout we will consider the  dynamics of $\varphi$ on $\Rb_+^N$  (the set of points in $\Rb^N$ with positive components).
\medskip

The notion of a mutation-periodic quiver was introduced in \cite{FoMa} where it was  shown that such  a quiver gives rise to a DDS generated by  the iterates of  a  birational map $\varphi$ which is the composition of involutions (mutations) and  permutations. In particular, the maps under study are constructed (see \cite{InEs2}) from mutation-periodic quivers of period 2, namely from the  $F_0$ and $dP_3$ quivers of Figure~2 in \cite{InEs2}. One of the main results in \cite{InEs2} is the extension to quivers of arbitrary period of the reduction result  previously obtained in \cite{FoHo2} for 1-periodic quivers.  Notably, the map associated to a mutation-periodic quiver  represented by an $N\times N$ skew-symmetric matrix  of rank $p$ can be reduced to a map $\hat{\varphi}: \Rb_+^p\rightarrow \Rb_+^p$ which is symplectic with respect to a symplectic form $\omega_0$ as in Proposition~\ref{prop1} below. 
 If the rank $p$ is not maximal, then this reduction means that $\varphi$ is {\it semiconjugate} to  $\hat{\varphi}$ by a map $\Pi$. That is, there is a surjective map $\Pi:  \Rb_+^N\rightarrow  \Rb_+^p$, called a {\it semiconjugacy}, such that 
$$
\Pi\circ\varphi=\hat{\varphi}\circ\Pi.
$$

\noindent In this case the map $\varphi$ will be called a {\it lift} of $\hat{\varphi}$ by $\Pi$. 

If the matrix representing the quiver is of maximal rank, that is $p=N$, then $\Pi$ is an homeomorphism, or {\it conjugacy}, and the map $\varphi$ is said to be {\it conjugate} to $\hat{\varphi}$ (see for instance \cite{Katok} and \cite{Brin}). 

As shown in \cite{InEs2}, given a map $\varphi$ associated to a mutation-periodic quiver the reduction procedure  is constructive, enabling us to obtain a reduced map $\hat{\varphi}$ and the corresponding semiconjugacy $\Pi$. This construction can be done using either a Poisson  or a presymplectic approach. Althought the Poisson approach may fail to apply  (due to the non existence of a convenient Poisson structure) in the case of the maps \eqref{eq:H0} and \eqref{eq:dP3} both approaches are applicable. Moreover as the matrices  representing  the $F_0$ and $dP_3$ quivers   are matrices of  rank 2, the reduced maps $\hat{\varphi}$ are 2-dimensional. The next proposition summarizes the results obtained in \cite{InEs2}.

\begin{proposition}[Cruz and Sousa-Dias \cite{InEs2}]\label{prop1}
 The maps $\varphi$ in \eqref{eq:H0} or in \eqref{eq:dP3}  are semiconjugate to symplectic  maps $\hat{\varphi}: \Rb_+^2\rightarrow \Rb_+^2$. That is,  the following diagram is commutative
$$\xymatrix{ \Rb_+ ^N  \ar @{->} [r]^\varphi\ar @{->} [d]_{\Pi} & \Rb_+ ^N \ar @{->}[d]^{\Pi}\\
 \Rb_+ ^{2}  \ar @{->} [r]_{\hat{\varphi}} &  \Rb_+ ^{2}}$$
and $\hat{\varphi}^*\omega_0=\omega_0$ with $\omega_0=\frac{dx\wedge dy}{x y}$.
More precisely, 
\begin{itemize}
\item[A)]  the map  \eqref{eq:H0} is semiconjugate to $\hat{\varphi}$ by $\Pi$, with
\begin{align}\nonumber
\Pi(x_1,\ldots, x_4) &= \left(\frac{x_1x_4}{x_2^2}, \frac{x_3}{x_2}\right),\\ \label{H0red}
\hat{\varphi}(x,y)&=\left(y \left(1+\frac{(1+y^2)^2}{x^2}\right), \frac{1+y^2}{x}\right).
\end{align}
\item[B)] the map \eqref{eq:dP3} is semiconjugate to $\hat{\varphi}$ by $\Pi$, with
\begin{align}\nonumber
\Pi(x_1,\ldots, x_6) &= \left(\frac{x_2x_4}{x_3x_5}, \frac{x_1x_4x_6}{x_3x_5^2}\right),\\ \label{dP3red}
\hat{\varphi}(x,y)&=\left(\frac{y}{1+x},\frac{y (1+x+y)}{x (1+x)^2}\right).
\end{align}
\end{itemize}
\end{proposition}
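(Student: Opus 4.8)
The plan is to treat the statement as a specialization of the general reduction theorem of \cite{InEs2} to the two quivers at hand, and to verify directly the three assertions it contains: that $\Pi$ is a well-defined surjection $\Rb_+^N\to\Rb_+^2$; that $\Pi\circ\varphi=\hat{\varphi}\circ\Pi$; and that $\hat{\varphi}^*\omega_0=\omega_0$. I would prove these in that order, since the first is immediate, the second is a finite rational-function identity, and the third can be deduced from the second together with the presymplectic invariance of $\varphi$ recalled in the Introduction.

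For the first point, each of the two components of $\Pi$ in A) and in B) is a Laurent monomial in the coordinates, hence strictly positive on $\Rb_+^N$, so $\Pi$ does map $\Rb_+^N$ into $\Rb_+^2$; surjectivity follows by freezing all but two suitably chosen coordinates (for instance $x_1,x_3$ in the $F_0$ case and $x_1,x_2$ in the $dP_3$ case), so that the two monomials become a multiplicatively invertible change of variables onto $\Rb_+^2$. For the semiconjugacy I would substitute the explicit components of $\varphi$ into $\Pi$ and simplify. The mechanism that makes this work is visible in the formulas: the combination $x_2^2+x_3^2$ appearing in \eqref{eq:H0} (respectively $x_2x_4+x_3x_5$ in \eqref{eq:dP3}) is exactly what reassembles, after division by the relevant monomial, the factor $1+y^2$ of \eqref{H0red} (respectively $1+x$ of \eqref{dP3red}). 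Concretely, in the $F_0$ case with $(x,y)=\Pi(x_1,\dots,x_4)=\bigl(\tfrac{x_1x_4}{x_2^2},\tfrac{x_3}{x_2}\bigr)$ one finds $\tfrac{1+y^2}{x}=\tfrac{x_2^2+x_3^2}{x_1x_4}$, which is the second component of $\Pi(\varphi(x))$, while the first component of $\Pi(\varphi(x))$ collapses to $y\bigl(1+\tfrac{(1+y^2)^2}{x^2}\bigr)$ under the same substitution; the $dP_3$ case is analogous but somewhat longer. I would check the simpler second component first in each case, since it pins down the normalization, and then the first.

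It remains to prove $\hat{\varphi}^*\omega_0=\omega_0$, and here there are two routes. The brute-force one: writing $\hat{\varphi}=(f,g)$, the identity $\hat{\varphi}^*\omega_0=\omega_0$ is equivalent to $\det\frac{\partial(f,g)}{\partial(x,y)}=\frac{fg}{xy}$, i.e. to the map $(\log x,\log y)\mapsto(\log f,\log g)$ having Jacobian determinant $1$, which is a short determinant computation for each of \eqref{H0red} and \eqref{dP3red}. The conceptual route, which I would prefer because it also explains where $\Pi$ comes from, uses that the $F_0$ and $dP_3$ quivers are mutation-periodic, so that $\varphi$ preserves the log-presymplectic form $\omega$ attached to the exchange matrix ($\varphi^*\omega=\omega$); one checks the single $2$-form identity $\Pi^*\omega_0=\omega$ on monomials (up to a nonzero constant, which is immaterial below), and then, since $\Pi$ is a surjective submersion, $\Pi^*(\hat{\varphi}^*\omega_0-\omega_0)=\varphi^*(\Pi^*\omega_0)-\Pi^*\omega_0=\varphi^*\omega-\omega=0$ forces $\hat{\varphi}^*\omega_0=\omega_0$.

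The computations in the semiconjugacy step and in the brute-force Jacobian step are the bulkiest part, particularly for $dP_3$, but they are entirely mechanical. The one place that needs care is the conceptual route, where one must verify that $\Pi^*\omega_0$ coincides with $\omega$ exactly (or at least up to a constant), which is precisely where the specific exchange matrices and the specific form of $\Pi$ produced by the reduction of \cite{InEs2} come in; getting that normalization right is the only genuine subtlety. If one is content to invoke \cite{InEs2} directly, the statement follows with no further obstacle, the present proposal being just an unpacking of that reference for these two quivers.
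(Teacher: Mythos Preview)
The paper does not actually prove this proposition: it is stated as a summary of results from \cite{InEs2} and no argument is given in the present paper. Your proposal is therefore not competing with any proof here; rather, it supplies the direct verification that the paper omits. The verification you outline is correct: $\Pi$ is a surjective submersion of $\Rb_+^N$ onto $\Rb_+^2$ for the reasons you give, the semiconjugacy $\Pi\circ\varphi=\hat{\varphi}\circ\Pi$ reduces to the rational-function identities you indicate (your sample computation in the $F_0$ case is right, and the $dP_3$ case goes through the same way), and the symplectic condition $\hat{\varphi}^*\omega_0=\omega_0$ is equivalent to the Jacobian identity $\det\partial(f,g)/\partial(x,y)=fg/(xy)$, which is a short check in both cases. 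Your conceptual route via $\Pi^*\omega_0=\omega$ and the injectivity of pullback along a surjective submersion is also valid, with the caveat you already flag about normalization; since the brute-force route is so short here, it is the cleaner choice if one does not wish to invoke the full machinery of \cite{InEs2}.
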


We remark that the reduced maps \eqref{H0red} and  \eqref{dP3red} belong to the group of birational transformations of $\Cb^2$ preserving the symplectic form $\omega_0$ in the proposition above. This group has been studied by Blanc in \cite{Blanc} who proved that it  is generated by $SL(2, \Zb)$, the complex torus $(\Cb^*)^2$ and a (Lyness) map of order 5.
\medskip

The reduced maps $\hat{\varphi}$ in Proposition~\ref{prop1} generate the discrete dynamical systems $\mathbf{x}_{n+1}=\hat{\varphi} (\mathbf{x}_n)$, with $\mathbf{x}_n=(x_n,y_n)$. That is, the DDS generated by the  maps $\hat{\varphi}$ given by \eqref{H0red} and \eqref{dP3red} are, respectively
\begin{align}\label{disc}
\left\{\begin{array}{ll}
x_{n+1}=&\frac{y_n\left(x_n^2+ (1+y_n^2)^2\right)}{x_n^2}\\
&\\
y_{n+1}=&\frac{1+y_n^2}{x_n}
\end{array}\right. ,&\qquad\left\{\begin{array}{ll}
x_{n+1}=&\frac{y_n}{1+x_n}\\
&\\
y_{n+1}=&\frac{y_n\left(1+x_n+y_n\right)}{x_n (1+x_n)^2}
\end{array}\right. 
\end{align}
To the best of our knowledge the systems \eqref{disc} do not belong to any family of 2-dimensional systems whose dynamics has been studied before. In particular, these systems are not generated by QRT maps which is a widely studied family of integrable  maps (see for instance \cite{QRT}, \cite{Duist} and \cite{BaRo}).

\section{Dynamics of the reduced maps}

In order to study the dynamics of the  maps  \eqref{eq:H0} and \eqref{eq:dP3} it  is  natural to begin with the study of  the dynamics of the corresponding reduced maps and then lift  it to the original phase space,  since  the dynamics of  (semi)conjugate maps are related. In fact, if a map $f$ is (semi)conjugate to a map $g$ by $\Pi$, then $\Pi\circ f= g\circ \Pi$ implies
\begin{equation}
\label{conjn}
\Pi\circ f^{(n)}= g^{(n)}\circ \Pi, \quad \forall\, n\in \Nb_0,
\end{equation}
where $f^{(0)}$ is the identity map and   $f^{(m)} = f\circ \cdots\circ f$ ($m$ compositions).
 Therefore, 
the (forward) orbit of a point $\mathbf{x}$ under the map $f$  (or the  $f$-orbit through $\mathbf{x}$),
 $$\mathcal{O}_f (\mathbf{x}) = \{ f^{(n)}(\mathbf{x}): n\in\Nb_0 \},$$ 
 and the orbit of $\Pi(\mathbf{x})$ under the map $g$ satisfy
\begin{equation}\label{orbitrelation}
\Pi\left(\mathcal{O}_f (\mathbf{x})\right)=\mathcal{O}_g (\Pi(\mathbf{x})).
\end{equation}

The main result of this section is that the reduced maps \eqref{H0red} and  \eqref{dP3red}  of \eqref{eq:H0} and of  \eqref{eq:dP3} are globally periodic. 

We recall that a map $f: U\subseteq \Rb^n\rightarrow U$ is said to be {\it globally $m$-periodic} if there exists some $m\in\Nb$  such that $f^{(m)}=Id$. Hereafter we say  that $f$ is  globally $m$-periodic if $m$ is the least positive integer for which $f^{(m)}=Id$ holds. 

Difference equations defined by globally periodic maps constitute a subject of extensive research, the literature is vast and the subject has been approached  from  different points of view (see  \cite{Cima1},  \cite{Cima3}, \cite{RMM},  \cite{BK2}, \cite{HBQC},  just to refer a few). In particular, globally periodic maps  are  completely integrable and it is always possible to compute enough independent first integrals.  In Section~\ref{sec5}  we will draw some consequences of the complete integrability of the maps \eqref{H0red} and \eqref{dP3red} to the dynamics of \eqref{eq:H0} and \eqref{eq:dP3}.

\medskip

\begin{theorem}\label{dynred}
The maps $\hat{\varphi}:\Rb^2_+\rightarrow \Rb^2_+$  given by \eqref{H0red}  and  \eqref{dP3red} are globally $m$-periodic with $m=4$ and $m=6$ respectively. Moreover, 
\begin{itemize}
\item[i)] the map $\hat{\varphi}$ in \eqref{H0red} is conjugate to the map $\psi$ by the conjugacy $\widetilde{\Pi}$, with
\begin{equation}\label{conj:H0}
\psi(x,y)=\left(y,\frac{1}{x}\right), \qquad \widetilde{\Pi} (x,y)=\left(\frac{y (1+y^2)}{x}, \frac{1+y^2}{x y}\right),
\end{equation}
\item[ii)] the map $\hat{\varphi}$ in \eqref{dP3red} is conjugate to the map $\psi$ by the conjugacy $\widetilde{\Pi}$, with
\begin{equation}\label{conj:dP3}
\psi(x,y)=\left(y,\frac{y}{x}\right), \qquad \widetilde{\Pi} (x,y)=\left(x, \frac{y}{1+x}\right),
\end{equation}
\end{itemize}
where $\widetilde{\Pi}, \psi:\Rb^2_+ \rightarrow \Rb^2_+ $.
\end{theorem}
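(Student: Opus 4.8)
The plan is to verify each of the two claims by a direct but carefully organized computation, exploiting the fact that conjugacy reduces everything to the already-understood maps $\psi(x,y)=(y,1/x)$ and $\psi(x,y)=(y,y/x)$. First I would record that these two model maps are manifestly globally periodic: for $\psi(x,y)=(y,1/x)$ one computes $\psi^{(2)}(x,y)=(1/x,1/y)$, $\psi^{(3)}(x,y)=(1/y,x)$, $\psi^{(4)}=Id$, and no smaller power works (e.g. $\psi^{(2)}\ne Id$), so $m=4$; for $\psi(x,y)=(y,y/x)$ one gets $\psi^{(2)}(x,y)=(y/x,1/x)$, $\psi^{(3)}(x,y)=(1/x,1/y)$, then $\psi^{(6)}=Id$ with no smaller period (this is the classical Lyness-type $5$-... actually $6$-cycle; checking $\psi^{(2)},\psi^{(3)}\ne Id$ suffices). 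Then global $m$-periodicity of $\hat\varphi$ is immediate from $\hat\varphi=\widetilde\Pi^{-1}\circ\psi\circ\widetilde\Pi$ once the conjugacy is established, since $\hat\varphi^{(m)}=\widetilde\Pi^{-1}\circ\psi^{(m)}\circ\widetilde\Pi=Id$, and minimality transfers as well because $\widetilde\Pi$ is a bijection of $\Rb^2_+$.

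So the real content is to check the two conjugacy identities $\widetilde\Pi\circ\hat\varphi=\psi\circ\widetilde\Pi$ and to check that each $\widetilde\Pi$ is a genuine homeomorphism (in fact a birational diffeomorphism) of $\Rb^2_+$ onto itself. For case i), writing $\hat\varphi(x,y)=\bigl(y(1+(1+y^2)^2/x^2),\,(1+y^2)/x\bigr)$, I would substitute into $\widetilde\Pi(u,v)=\bigl(v(1+v^2)/u,\,(1+v^2)/(uv)\bigr)$ with $(u,v)=\hat\varphi(x,y)$ and simplify, aiming to land on $\psi(\widetilde\Pi(x,y))=\bigl(\widetilde\Pi_2(x,y),\,1/\widetilde\Pi_1(x,y)\bigr)=\bigl((1+y^2)/(xy),\,x/(y(1+y^2))\bigr)$. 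For case ii), with $\hat\varphi(x,y)=\bigl(y/(1+x),\,y(1+x+y)/(x(1+x)^2)\bigr)$ and $\widetilde\Pi(x,y)=(x,\,y/(1+x))$, the computation is lighter: $\widetilde\Pi(\hat\varphi(x,y))$ should equal $\bigl(y/(1+x),\, \text{[second component]}/(1+y/(1+x))\bigr)$ and this should match $\psi(\widetilde\Pi(x,y))=\bigl(y/(1+x),\,(y/(1+x))/x\bigr)$. These are routine rational-function identities; I would present them compactly, perhaps clearing denominators once.

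For the homeomorphism claim, in case ii) the inverse is explicit, $\widetilde\Pi^{-1}(x,y)=(x,\,y(1+x))$, and both $\widetilde\Pi$ and $\widetilde\Pi^{-1}$ clearly map $\Rb^2_+$ into $\Rb^2_+$ and are continuous, so this is trivial. In case i) I would exhibit the inverse of $\widetilde\Pi$ explicitly: setting $u=y(1+y^2)/x$, $v=(1+y^2)/(xy)$, one has $u/v=y^2$, so $y=\sqrt{u/v}$, and then $x=y(1+y^2)/u$; one checks $x,y>0$ for all $(u,v)\in\Rb^2_+$ and that the formulas give a continuous two-sided inverse. The main obstacle, and the only place where care is genuinely needed, is the algebraic simplification in case i): the first component of $\widetilde\Pi\circ\hat\varphi$ involves $(1+y^2)$ and $(1+(1+y^2)^2/x^2)$ nested inside $\widetilde\Pi$, so one must be disciplined about common denominators (the factor $x^2+(1+y^2)^2$ will appear and must cancel cleanly against the corresponding factor coming from the first component of $\hat\varphi$). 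I expect that after introducing the shorthand $A=1+y^2$ and $B=x^2+A^2$ the cancellations become transparent and the identity falls out in a few lines.
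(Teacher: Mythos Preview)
Your proposal is correct and follows essentially the same approach as the paper: verify directly that each $\psi$ is globally $m$-periodic, exhibit an explicit inverse for each $\widetilde{\Pi}$ on $\Rb^2_+$, check the conjugacy identity $\widetilde{\Pi}\circ\hat\varphi=\psi\circ\widetilde{\Pi}$ by straightforward computation, and conclude periodicity (and minimality) of $\hat\varphi$ from that of $\psi$ via the bijection. The paper omits the algebraic details you sketch for case~i) (simply saying the identity is easy to check) and records the inverse in case~i) as $\widetilde{\Pi}^{-1}(x,y)=\bigl(\tfrac{x+y}{y^2}\sqrt{y/x},\,\sqrt{x/y}\bigr)$, which agrees with your derivation.
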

\begin{proof} Once we prove  the statements in items {\it i)} and {\it ii)} the global periodicity of the reduced  maps  $\hat{\varphi}$ follows from the invertibility of the maps $\widetilde{\Pi}$,  the global periodicity of the maps $\psi$ and the relation \eqref{conjn}. 

The maps $\psi$ given by \eqref{conj:H0} and   \eqref{conj:dP3} are globally $m$-periodic with $m=4$ and $m=6$ respectively, since
$$(x,y)\stackrel{\psi}{\longrightarrow} \left(y,\frac{1}{x}\right)\stackrel{\psi}{\longrightarrow} \left(\frac{1}{x}, \frac{1}{y}\right)\stackrel{\psi}{\longrightarrow} \left(\frac{1}{y}, x\right)\stackrel{\psi}{\longrightarrow} \left(x,y\right),
$$
and
$$(x,y)\stackrel{\psi}{\longrightarrow} \left(y,\frac{y}{x}\right)\stackrel{\psi}{\longrightarrow} \left(\frac{y}{x}, \frac{1}{x}\right)\stackrel{\psi}{\longrightarrow} \left(\frac{1}{x}, \frac{1}{y}\right)\stackrel{\psi}{\longrightarrow} \left(\frac{1}{y},\frac{x}{y}\right)\stackrel{\psi}{\longrightarrow} \left(\frac{x}{y},x\right)\stackrel{\psi}{\longrightarrow} \left(x,y\right).
$$
 The maps $\widetilde{\Pi}$  in \eqref{conj:H0} and  \eqref{conj:dP3} are invertible and their inverses are given respectively by
$$
\widetilde{\Pi}^{-1} (x,y) =\left(\frac{x+y}{y^2}\sqrt{\frac{y}{x}}, \sqrt{\frac{x}{y}}\right), \qquad \widetilde{\Pi}^{-1} (x,y) =\left(x, (1+x) y\right).
$$
 Finally,  it is easy to check that $\widetilde{\Pi}\circ\hat{\varphi}= \psi\circ\widetilde{\Pi}$ in both cases.
\end{proof}

 We note that the expressions of the reduced maps \eqref{H0red} and \eqref{dP3red} did not give any prior indication of their globally periodic behaviour. The conjugacies in the last theorem enabled us not only to realize this global periodicity property  but  also  to obtain explicit semiconjugacies between the original maps and extremely simple global periodic maps (the maps $\psi$ given in \eqref{conj:H0} and \eqref{conj:dP3}).

\bigbreak

 Each reduced map $\hat{\varphi}$ and its conjugate map $\psi$ have topologically equivalent dynamics, and the dynamics  in $\Rb^2_+$ of  the maps $\psi$ in \eqref{conj:H0} and \eqref{conj:dP3} is very simple. In fact, computing the fixed points and the periodic points of these maps one easily obtains: (a) $(1,1)$ is the unique fixed point  of both maps $\psi$,  being a center in both cases; (b) any other point is   periodic with minimal period $4$ in the case of  \eqref{conj:H0} and  minimal period $6$ in the case of \eqref{conj:dP3}.

Recall that a  periodic point $\mathbf{x}$ of $f$ is  said to have     {\it minimal period} $m$ if   $f^{(m)}(\mathbf{x})=\mathbf{x}$ and $f^{(k)}(\mathbf{x})\neq\mathbf{x}$ for all $k<m$.

Concluding, the fixed points of the DDS in \eqref{disc}  are 
$(2,1)$ and $(1,2)$ respectively, 
 and all the other points are periodic with minimal period $4$  in the case of the first system and minimal period $6$ for the  other system.

\begin{remark} We note that  not all maps arising from mutation-periodic quivers are semiconjugate to globally periodic maps. For instance, the maps which generate the Somos-4 and Somos-5 sequences  are maps associated to  quivers of mutation-period equal to 1  ($dP_1$ and $dP_2$ quivers)  and they are not semiconjugate to globally periodic maps (see \cite{FoHo} and \cite{FoHo2}).
\end{remark}

\section{Dynamics of the maps \eqref{eq:H0} and \eqref{eq:dP3}}\label{sec4}

In this section we study the dynamics of the maps \eqref{eq:H0} and \eqref{eq:dP3} from the dynamics of the  maps $\psi$ in Theorem~\ref{dynred}. The general relationship between the original maps $\varphi$, the reduced maps $\hat{\varphi}$ and their conjugate maps $\psi$ is sketched in the following commutative diagram
$$\xymatrix{ \Rb_+ ^N  \ar @{->} [r]^{\Pi}\ar @{->} [d]_{\varphi} & \Rb_+ ^2 \ar @{->}[d]^{\hat{\varphi}}\ar @{->}[r]^{\widetilde{\Pi}} &\Rb_+ ^2\ar @{->}[d]^{\psi}\\
 \Rb_+ ^{N}  \ar @{->} [r]_{\Pi} &  \Rb_+ ^{2} \ar @{->}[r]_{\widetilde{\Pi}}&\Rb_+ ^2
 }$$
 where $N=4$ for \eqref{eq:H0}, $N=6$ for \eqref{eq:dP3}, the semiconjugacies $\Pi$ and the reduced maps $\hat{\varphi}$ are given in Proposition~\ref{prop1} and the conjugacies $\widetilde{\Pi}$ and the maps $\psi$ are given in Theorem~\ref{dynred}. Note that the compositions $\pi =  \widetilde{\Pi}\circ\Pi$ provide a semiconjugacy between the maps $\varphi$ and $\psi$, since
$$ \widetilde{\Pi}\circ\Pi\circ \varphi= \psi\circ \widetilde{\Pi}\circ\Pi\Longleftrightarrow \pi\circ\varphi= \psi\circ\pi.
$$
  
  We remark that although the dynamics of the maps $\hat{\varphi}$ and $\psi$ are topologically equivalent, the same  does not hold for the dynamics of $\varphi$ and $\psi$ since $\pi=\widetilde{\Pi}\circ\Pi$ is only a semiconjugacy. However due to the global periodicity of   the maps $\psi$ we will be able to lift their dynamics to have a complete understanding of the dynamics of $\varphi$. 

\begin{theorem}\label{orbits}  Let  $f:\Rb_+^N\rightarrow\Rb_+^N$ be  semiconjugate to $g :\Rb_+^p\rightarrow\Rb_+^p$  by   $\pi:\Rb_+^N\rightarrow\Rb_+^p$ with $N>p$,  and for any $P\in \Rb_+^p$ let 
$$C_P:=\left\{\mathbf{x}\in\Rb_+^N:\,\, \pi(\mathbf{x}) =P\right\}.
$$
If $P=\pi(\mathbf{x})$ is a periodic point of $g$ of minimal period $m$ then:
\begin{enumerate}
\item $f\left(C_{g^{(i)}(P)}\right)\subset C_{g^{(i+1)}(P)}$, for all $i\in\Nb_0$. In particular, if $P$ is a fixed point of $g$ then $C_P$ is invariant under $f$.
\item $C_{g^{(i)}(P)}$ is invariant under $f^{(m)}$, for all $i\in\Nb_0$.
\item if $m>1$, the sets $C_{g^{(i)}(P)}$ with   $i=0,1,\dots,m-1$ are pairwise disjoint and  the $f$-orbit through $\mathbf{x}$ satisfies $\mathcal{O}_f (\mathbf{x})\subset \bigsqcup_{i=0}^{m-1}C_{g^{(i)}(P)}:=S_P$.
\end{enumerate}
\end{theorem}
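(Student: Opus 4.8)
The plan is to unpack the semiconjugacy relation $\pi\circ f=g\circ\pi$ and its iterated form \eqref{conjn} and simply track where points of $C_P$ go. The whole statement is really a soft consequence of the fact that $\pi$ intertwines $f$ and $g$, so no hard analysis is involved; the work is just bookkeeping with the level sets $C_Q=\pi^{-1}(Q)$.

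First I would prove item 1. Take $\mathbf{x}\in C_{g^{(i)}(P)}$, i.e. $\pi(\mathbf{x})=g^{(i)}(P)$. Applying $\pi\circ f=g\circ\pi$ gives $\pi(f(\mathbf{x}))=g(\pi(\mathbf{x}))=g^{(i+1)}(P)$, so $f(\mathbf{x})\in C_{g^{(i+1)}(P)}$. This is the inclusion $f(C_{g^{(i)}(P)})\subset C_{g^{(i+1)}(P)}$. When $P$ is a fixed point of $g$ we have $g^{(i)}(P)=P$ for all $i$, so this reads $f(C_P)\subset C_P$, i.e. $C_P$ is $f$-invariant. For item 2, since $P$ has minimal period $m$ we have $g^{(m)}(g^{(i)}(P))=g^{(i)}(g^{(m)}(P))=g^{(i)}(P)$; then for $\mathbf{x}\in C_{g^{(i)}(P)}$, relation \eqref{conjn} gives $\pi(f^{(m)}(\mathbf{x}))=g^{(m)}(\pi(\mathbf{x}))=g^{(m)}(g^{(i)}(P))=g^{(i)}(P)$, so $f^{(m)}(\mathbf{x})\in C_{g^{(i)}(P)}$, proving $f^{(m)}$-invariance. (Alternatively, item 2 follows from iterating the inclusion in item 1 $m$ times and using $g^{(i+m)}(P)=g^{(i)}(P)$.)

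For item 3, suppose $m>1$. If $C_{g^{(i)}(P)}\cap C_{g^{(j)}(P)}\neq\emptyset$ for some $0\le i<j\le m-1$, any common point $\mathbf{x}$ would satisfy $g^{(i)}(P)=\pi(\mathbf{x})=g^{(j)}(P)$; applying $g^{(m-j)}$ and using that $g$ restricted to the orbit is a bijection (it is, being a cyclic permutation of the $m$ distinct points $P,g(P),\dots,g^{(m-1)}(P)$), one gets $g^{(m-(j-i))}(P)=P$ with $0<m-(j-i)<m$, contradicting minimality of $m$. Hence the $C_{g^{(i)}(P)}$, $i=0,\dots,m-1$, are pairwise disjoint. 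Finally, for the orbit statement, note $\mathbf{x}\in C_P=C_{g^{(0)}(P)}$, and by item 1 $f^{(n)}(\mathbf{x})\in C_{g^{(n)}(P)}$; writing $n=qm+r$ with $0\le r<m$ and using $g^{(n)}(P)=g^{(r)}(P)$, we get $f^{(n)}(\mathbf{x})\in C_{g^{(r)}(P)}\subset S_P$ for all $n\in\Nb_0$, so $\mathcal{O}_f(\mathbf{x})\subset S_P$.

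There is no real obstacle here; the only point requiring a moment's care is the disjointness in item 3, where one must invoke that $g$ permutes the finite periodic orbit bijectively rather than just that $g^{(m)}$ fixes $P$ — otherwise one cannot cancel to contradict minimality. Everything else is a direct substitution into \eqref{conjn}. I would present the three items in the order 1, 2, 3 since item 2's short proof and item 3's orbit-confinement both reuse the computation from item 1.
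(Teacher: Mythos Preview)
Your proof is correct and follows essentially the same approach as the paper's: item 1 is identical, item 2 is the paper's argument (your ``alternatively'' is exactly what the paper does), and item 3 matches the paper's reasoning, with your disjointness argument simply spelling out in more detail what the paper states in one line. One small remark: your aside that bijectivity of $g$ on the orbit is needed to ``cancel'' is unnecessary---applying $g^{(m-j)}$ to both sides of $g^{(i)}(P)=g^{(j)}(P)$ works for any map and already yields $g^{(m-(j-i))}(P)=P$, so no cancellation is involved.
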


\begin{proof}

\begin{enumerate}
\item Let $\mathbf{x}\in C_{g^{(i)} (P)}$, that is $\pi(\mathbf{x}) =g^{(i)} (P)$. By the semiconjugation assumption,   $\pi\circ f=g\circ\pi$, we have
$$\pi\left(f(\mathbf{x})\right)=g\left(g^{(i)} (P)\right)= g^{(i+1)} (P)\Longleftrightarrow f(\mathbf{x})\in C_{g^{(i+1)}(P)}.
$$
\item By item 1, $f^{(m)}\left(C_{g^{(i)}(P)}\right)\subset C_{g^{(i+m)}(P)}$ and  by the $m$-periodicity of $P$ we have $C_{g^{(i+m)}(P)}= C_{g^{(i)}(P)}$ from which the conclusion follows.
\item The assertion  that the sets $C_{g^{(i)} (P)}$ and $C_{g^{(j)} (P)}$ are pairwise disjoint  follows directly from the  assumption that  $P$ is a periodic point of $g$ with minimal period $m$. In fact,  a nonempty  intersection   would imply  that $P$ is $k$-periodic with $k<m$. 

 Finally, from \eqref{orbitrelation} we have $\pi\left(\mathcal{O}_f (\mathbf{x})\right)=\mathcal{O}_g (P)$ and by the periodicity assumption
$$\mathcal{O}_g (P)=\{P,g(P),\ldots,g^{(m-1)}(P)\}.$$ 
Therefore $\mathcal{O}_f (\mathbf{x})\subset S_P$.
\end{enumerate}
 \end{proof}
 Schematically, under the conditions of Theorem~\ref{orbits} the orbit  $\mathcal{O}_f (\mathbf{x})$ of a point  $\mathbf{x}\in C_P$, where $P$ is  an $m$-periodic point of $g$, has the following structure:

$$
   \xymatrix{ & C_{g(P)} \ar@/^/[dr]^{f} &\\
   C_P \ar@/^/[ru]^{f}&& \cdots \ar@/^/[dl]^{f}\\
               & \ar@/^/[ul]^{f}  C_{g^{(m-1)}(P)}  &&}
$$

 We remark that, as the maps  $\varphi$ in  \eqref{eq:H0} and \eqref{eq:dP3} are semiconjugate to the globally $m$-periodic maps $\psi$  in \eqref{conj:H0} and \eqref{conj:dP3} respectively, the above theorem gives the general structure  of  all the orbits  of  \eqref{eq:H0} and \eqref{eq:dP3}. More precisely, as the maps $\psi$ only have a fixed point and any other point is $m$-periodic we have:
\begin{itemize}
\item the fixed point of each $\psi$ lifts to an algebraic variety which is invariant under $\varphi$;
\item the lift of any other point gives rise to $m$ algebraic varieties of codimension 2 which  are mapped cyclically one into the other by the map $\varphi$ and are invariant under the map $\varphi^{(m)}$, where $m=4$ in the case of  \eqref{eq:H0} and $m=6$ in the case of \eqref{eq:dP3}.  In particular, the set
\begin{equation}
\label{sp}
S_P = \bigsqcup_{i=0}^{m-1}C_{\psi^{(i)}(P)}
\end{equation}
is invariant under the map $\varphi$. 
\end{itemize}

 In what follows we will use this general structure to compute the explicit solution of the DDS generated by the maps \eqref{eq:H0} and \eqref{eq:dP3}.

\subsection{Dynamics of the map  \eqref{eq:H0}}

Consider the map $\varphi$ in \eqref{eq:H0}
$$\varphi(x_1,x_2,x_3,x_4)=\left(x_3,x_4,\frac{x_2^2+x_3^2}{x_1}, \frac{x_1^2x_4^2+ (x_2^2+x_3^2)^2}{x_1^2x_2}\right).$$

From Proposition~\ref{prop1} and Theorem~\ref{dynred},
$\pi\circ\varphi=\psi\circ\pi$ with
\begin{equation}\label{H0eq1}
\psi(x,y)=\left(y,\frac{1}{x}\right),\quad \pi\left(x_1,x_2,x_3,x_4 \right)=\left(\frac{x_3(x_2^2+x_3^2)}{x_1 x_2 x_4},\frac{x_2 (x_2^2+x_3^2) }{x_1 x_3 x_4}\right).
\end{equation}
Let $P=(a,b)\in\Rb^2_+$ and $C_P=\left\{\mathbf{x}\in\Rb^4_+: \pi (\mathbf{x})=P\right\}$, that is
\begin{equation}\label{CH0}
C_{(a,b)}= \left\{(x_1,x_2,x_3,x_4)\in\Rb^4_+:\,\, x_2^2+x_3^2= \sqrt{ab}\, x_1 x_4 ,\,\, x_3= \sqrt{\frac{a}{b} }x_2\right\}.
\end{equation}

 The next theorem describes the dynamics of the map \eqref{eq:H0}.

\begin{theorem}\label{dymH0}
Let $\varphi$ be the map  \eqref{eq:H0}, $C_{(a,b)}$ given by  \eqref{CH0}  and $n\in\mathbb{N}_0$. Then, 
\begin{enumerate}
\item $\varphi (C_{(1,1)}) = C_{(1,1)}$ and for any  $\mathbf{x}=(x_1,x_2,x_3,x_4)\in C_{(1,1)}$ we have
\begin{equation}
\label{fi11}
\varphi^{(n)}(\mathbf{x})=2^{\frac{n(n-1)}{2}}\left(\frac{x_2}{x_1}\right)^n\left(x_1, 2^nx_2, 2^nx_3,4^{n}x_4 \right).
\end{equation}

\item  $\varphi^{(4)}(C_{(a,b)}) = C_{(a,b)}$ and  for any  $\mathbf{x} =(x_1,x_2,x_3,x_4)\in C_{(a,b)}$ we have
\begin{equation}
\label{fiab}
\varphi^{(4n)}(\mathbf{x})=\left (\frac{k_2}{k_1}\right )^{2 n(n-1)}\left(\frac{x_2}{x_1}\right)^{4n}\left(k_1^nx_1, k_2^nx_2, k_2^nx_3,\frac{k_2^{2n}}{k^n_1}x_4 \right),
\end{equation}
with 
\begin{equation}\label{constH0}
k_1=\frac{(1+ab)^2(a+b)^4}{a^3b^5},  \qquad k_2=\frac{(1+a b)^4 (a+b)^6}{a^5b^7}.
\end{equation}
\end{enumerate}
Moreover, $\varphi$ has no periodic points and each  component of $\varphi^{(n)}(\mathbf{x})$ goes to $+\infty$ as $n\rightarrow +\infty$.

\end{theorem}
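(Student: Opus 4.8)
The plan is to exploit the semiconjugacy $\pi\circ\varphi=\psi\circ\pi$ together with the explicit description of the fibers $C_{(a,b)}$ in \eqref{CH0}. Since $\psi$ in \eqref{H0eq1} has $(1,1)$ as its unique fixed point and all other points are $4$-periodic, Theorem~\ref{orbits} already guarantees that $C_{(1,1)}$ is $\varphi$-invariant and that $C_{(a,b)}$ is invariant under $\varphi^{(4)}$ for $(a,b)\neq(1,1)$; what remains is to produce the \emph{closed-form} iterates \eqref{fi11} and \eqref{fiab}. First I would parametrize a point of $C_{(a,b)}$ economically: the two constraints $x_3=\sqrt{a/b}\,x_2$ and $x_2^2+x_3^2=\sqrt{ab}\,x_1x_4$ leave a two-parameter family, so I would carry $(x_1,x_2)$ as free coordinates and solve $x_3=\sqrt{a/b}\,x_2$, $x_4=\frac{x_2^2+x_3^2}{\sqrt{ab}\,x_1}=\frac{(a+b)x_2^2}{b\sqrt{ab}\,x_1}$.

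Next I would compute $\varphi$ on $C_{(1,1)}$ directly. On that fiber $x_3=x_2$ and $x_4=2x_2^2/x_1$, and plugging into \eqref{eq:H0} one finds $\varphi(\mathbf{x})=\bigl(x_2,\tfrac{2x_2^2}{x_1},\tfrac{2x_2^2}{x_1},\tfrac{4x_2^3}{x_1^2}\bigr)=\tfrac{x_2}{x_1}\bigl(x_1,2x_2,2x_2,4x_2\bigr)$ after recognizing that the image again lies in $C_{(1,1)}$ (which it must, by Theorem~\ref{orbits}, but checking it confirms the bookkeeping). This exhibits $\varphi$ restricted to $C_{(1,1)}$ as a near-scaling map, and \eqref{fi11} should then follow by an induction on $n$: writing $\varphi^{(n)}(\mathbf{x})=\lambda_n(x_1,2^n x_2,2^n x_3,4^n x_4)$ and tracking how the scalar prefactor $\lambda_n$ evolves. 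The recursion for $\lambda_n$ will be of the form $\lambda_{n+1}=\lambda_n\cdot 2^{n}\cdot(\text{ratio of second to first component})$, and since that ratio is $2^{n}x_2/x_1$ at step $n$, one gets $\lambda_{n+1}=2^{n}(x_2/x_1)\lambda_n$, whose solution is $\lambda_n=2^{n(n-1)/2}(x_2/x_1)^n$, matching \eqref{fi11}.

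For part 2 I would not iterate $\varphi$ four times by hand; instead I would use the structure map $\varphi^{(4)}:C_{(a,b)}\to C_{(a,b)}$, which by the same reasoning as above must again be of the form $\mathbf{x}\mapsto \mu(k_1^{?}x_1,\dots)$. The cleanest route is to track how the first two coordinates transform: since $\varphi$ shifts coordinates ($x_1,x_2,x_3,x_4)\mapsto(x_3,x_4,\star,\star)$, four applications return genuinely to "position $1$ and $2$" slots, and on the fiber the whole orbit is pinned down by the images of $x_1,x_2$. I would compute $\varphi^{(4)}$ on the two free coordinates using the parametrization above — this is the one unavoidable block of algebra — obtaining $x_1\mapsto c_1 x_1,\ x_2\mapsto c_2 x_2$ (times a common power of $x_2/x_1$), identify $c_1,c_2$ with the constants $k_1,k_2$ in \eqref{constH0} (checking that at $(a,b)=(1,1)$ they reduce to $16=2^4$ and $64=2^6$, consistent with applying \eqref{fi11} four times), and then promote the one-step formula to \eqref{fiab} by the same induction on $n$ as in part 1, with the scalar recursion now $\mu_{n+1}=\mu_n\,(k_2/k_1)^{?}\,(x_2/x_1)^{4}$ solving to the stated $(k_2/k_1)^{2n(n-1)}(x_2/x_1)^{4n}$.

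The final clauses — no periodic points, and every component of $\varphi^{(n)}(\mathbf{x})\to+\infty$ — I would read off from \eqref{fi11} and \eqref{fiab} together with the following observation: any $\varphi$-orbit lies in some $S_P=\bigsqcup_{i=0}^{3}C_{\psi^{(i)}(P)}$, so periodicity of a point of $\varphi$ would force periodicity along the subsequence $\varphi^{(4n)}$ inside a single fiber; but on $C_{(a,b)}$ the first component of $\varphi^{(4n)}(\mathbf{x})$ is $(k_2/k_1)^{2n(n-1)}(x_2/x_1)^{4n}k_1^n x_1$, and one checks $k_1,k_2>1$ and $k_2>k_1$ for all $(a,b)\in\Rb^2_+$ (an elementary inequality: $k_1=\frac{(1+ab)^2(a+b)^4}{a^3b^5}$ is minimized, by AM–GM on $1+ab\ge 2\sqrt{ab}$ and $a+b\ge 2\sqrt{ab}$, at value $2^4$, similarly $k_2\ge 2^6$), so the exponent of $n$ in the prefactor is eventually dominant and each component diverges; divergence on $C_{(1,1)}$ is immediate from \eqref{fi11} since $2^{n(n-1)/2}\to\infty$. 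The main obstacle is purely computational: carrying out $\varphi^{(4)}$ on the fiber $C_{(a,b)}$ cleanly enough that the constants collapse to exactly $k_1,k_2$ in \eqref{constH0} — the intermediate expressions will be large, and the bookkeeping of which power of $x_2/x_1$ is pulled out at each of the four steps needs care, but there is no conceptual difficulty once the parametrization is fixed.
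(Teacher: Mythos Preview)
Your approach is essentially the paper's: invoke Theorem~\ref{orbits} for invariance, compute the restriction of $\varphi$ (resp.\ $\varphi^{(4)}$) to the fiber, recognize it as a map of the form $\mathbf{x}\mapsto G(\mathbf{x})D\mathbf{x}$ with $G$ transforming by a constant factor, and iterate by induction. The paper abstracts your induction into a standalone lemma (Lemma~\ref{lemtec}): if $g(\mathbf{x})=G(\mathbf{x})D\mathbf{x}$ with $G(g(\mathbf{x}))=cG(\mathbf{x})$, then $g^{(n)}(\mathbf{x})=c^{n(n-1)/2}G(\mathbf{x})^nD^n\mathbf{x}$. It also explicitly upgrades $\varphi(C_{(1,1)})\subset C_{(1,1)}$ to equality by invoking the birationality (invertibility) of $\varphi$, which you should add.

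A few arithmetic slips to fix. On $C_{(1,1)}$ the fourth component of $\varphi(\mathbf{x})$ is $8x_2^3/x_1^2$, not $4x_2^3/x_1^2$; the clean factorization is $\bar\varphi(\mathbf{x})=\tfrac{x_2}{x_1}(x_1,2x_2,2x_3,4x_4)$ in the original coordinates, not $\tfrac{x_2}{x_1}(x_1,2x_2,2x_2,4x_2)$. Your consistency check at $(a,b)=(1,1)$ is off: plugging into \eqref{constH0} gives $k_1=2^6=64$ and $k_2=2^{10}=1024$ (not $16$ and $64$), which does match \eqref{fi11} at $n=4$ since $\varphi^{(4)}(\mathbf{x})=2^6(x_2/x_1)^4(x_1,2^4x_2,2^4x_3,2^8x_4)$. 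Finally, your AM--GM bound as written yields only $k_1\ge 64/b^2$, not $k_1\ge 16$; the paper simply asserts (and one checks directly) that $k_1$, $k_2$ and $k_2/k_1$ are strictly greater than $1$ on $\Rb_+^2\setminus\{(1,1)\}$, which is all that is needed for divergence.
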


\bigbreak
Before  proving  this theorem  we  prove a lemma which will be useful in the computations involved  in both  Theorem~\ref{dymH0} and Theorem~\ref{dymdP3}.

\begin{lemma}
\label{lemtec}
Let $g: {\cal S}\subset\Rb^n\longrightarrow  {\cal S}$ be a map of the form $g({\bf x})=G({\bf x})D{\bf x}$, with $D$ a constant diagonal matrix and $G$ a real-valued function defined on ${\cal S}$. If $G(g ({\bf x}))=c\, G({\bf x})$ for some constant $c\in\Rb$ then
  $$g^{(n)}({\bf x})=c^{\frac{n(n-1)}{2}}G^n({\bf x})D^n{\bf x},$$
for all $n\in\mathbb{N}_0$.
\end{lemma}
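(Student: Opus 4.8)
The plan is to prove the formula for $g^{(n)}$ by induction on $n$, which is the natural approach given the recursive structure of the hypothesis $G(g(\mathbf{x})) = c\, G(\mathbf{x})$.

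\medskip

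First I would check the base case: for $n=0$ the claimed formula reads $g^{(0)}(\mathbf{x}) = c^0 G^0(\mathbf{x}) D^0 \mathbf{x} = \mathbf{x}$, which holds since $g^{(0)} = \mathrm{Id}$ (with the convention $c^0 = 1$, $G^0 = 1$, $D^0 = I$). For $n=1$ the formula reduces to the defining relation $g(\mathbf{x}) = G(\mathbf{x}) D\mathbf{x}$. Then I would assume the formula holds for some $n \geq 1$, i.e. $g^{(n)}(\mathbf{x}) = c^{\frac{n(n-1)}{2}} G^n(\mathbf{x}) D^n \mathbf{x}$, and compute $g^{(n+1)}(\mathbf{x}) = g\bigl(g^{(n)}(\mathbf{x})\bigr) = G\bigl(g^{(n)}(\mathbf{x})\bigr) D\, g^{(n)}(\mathbf{x})$.

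\medskip

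The key step is to evaluate $G\bigl(g^{(n)}(\mathbf{x})\bigr)$. Writing $g^{(n)} = g \circ g^{(n-1)}$ and iterating the hypothesis $G(g(\mathbf{y})) = c\, G(\mathbf{y})$, one gets $G\bigl(g^{(n)}(\mathbf{x})\bigr) = c^n\, G(\mathbf{x})$ (a short sub-induction, or simply noting that each application of $g$ multiplies the value of $G$ by $c$). Substituting the inductive hypothesis for $g^{(n)}(\mathbf{x})$ and using that $D$ is a constant matrix so it commutes past the scalars, I would obtain
\begin{equation*}
g^{(n+1)}(\mathbf{x}) = c^n G(\mathbf{x}) \cdot D \cdot c^{\frac{n(n-1)}{2}} G^n(\mathbf{x}) D^n \mathbf{x} = c^{n + \frac{n(n-1)}{2}} G^{n+1}(\mathbf{x}) D^{n+1}\mathbf{x}.
\end{equation*}
Since $n + \frac{n(n-1)}{2} = \frac{n(n+1)}{2}$, this is exactly the claimed formula with $n$ replaced by $n+1$, completing the induction.

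\medskip

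I do not expect any serious obstacle here; the only mild subtlety is bookkeeping the exponent of $c$ and making sure the auxiliary claim $G(g^{(n)}(\mathbf{x})) = c^n G(\mathbf{x})$ is justified cleanly (it itself follows by a trivial induction from the hypothesis). One should also note that everything stays inside $\mathcal{S}$ because $g$ maps $\mathcal{S}$ to $\mathcal{S}$, so all the compositions and evaluations of $G$ are well-defined.
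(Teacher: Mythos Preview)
Your proof is correct and follows essentially the same approach as the paper's own proof: induction on $n$, using the auxiliary fact $G(g^{(n)}(\mathbf{x})) = c^n G(\mathbf{x})$ to pass from $g^{(n)}$ to $g^{(n+1)}$ and then simplifying the exponent of $c$. The only difference is that you are slightly more explicit about the base cases and about why $G(g^{(n)}(\mathbf{x})) = c^n G(\mathbf{x})$ holds, which is fine.
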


\begin{proof} The proof follows easily by induction on $n$. In fact, as  $G(g^{(n)}({\bf x}))=c^nG({\bf x})$ we have
\begin{align*}
g^{(n+1)}({\bf x})&=g(g^{(n)}({\bf x}))=G(g^{(n)}({\bf x}))D(g^{(n)}({\bf x}))\\
&=c^nG({\bf x})c^{\frac{n(n-1)}{2}} G^n({\bf x}) D^{n+1}{\bf x}\\
&=c^{\frac{n(n+1)}{2}} G^{n+1}({\bf x})D^{n+1}{\bf x}.
\end{align*}

\end{proof}

\begin{proof}[Proof of Theorem \ref{dymH0}]

 The map $\varphi$  is semiconjugate to the globally $4$-periodic map $\psi$ in \eqref{H0eq1} and  $(1,1)$ is the unique fixed point  of $\psi$ in $\Rb^2_+$. Therefore,  it follows from Theorem~\ref{orbits} that $C_{(1,1)}$ is invariant under $\varphi$ and $C_{(a,b)}$ is invariant under  $\varphi^{(4)}$, for all $(a,b)\in\Rb^2_+$. As  $\varphi:\Rb^4_+\rightarrow \Rb^4_+$ is invertible (it is birational) then $\varphi(C_{(1,1)})= C_{(1,1)}$ and $\varphi^{(4)}(C_{(a,b)})=C_{(a,b)}$.

The invariance of $C_{(1,1)}$ under $\varphi$ allows us to compute, for ${\bf x}\in C_{(1,1)}$, the general expression of $\varphi^{(n)}(\mathbf{x})$ through the restriction of $\varphi$ to $C_{(1,1)}$. That is, for $\mathbf{x}\in C_{(1,1)}$ we have  $\varphi^{(n)}(\mathbf{x})= \bar{\varphi}^{(n)}(\mathbf{x})$ where 
$\bar{\varphi}= \varphi\restr{C_{(1,1)}}$.

Using the expression of $\varphi$  and the definition of $C_{(1,1)}$ in \eqref{CH0} we obtain 
 $$\bar{\varphi}(\mathbf{x})=\frac{x_2}{x_1}\left(x_1,2x_2,2x_3,4x_4\right), \qquad  \mathbf{x}=(x_1,x_2,x_3,x_4)\in C_{(1,1)}.$$
The map $\bar{\varphi}$ can be written as $\bar{\varphi}(\mathbf{x})= G(\mathbf{x}) D\mathbf{x}$ with $D=\operatorname{diag} (1,2,2,4)$ and $G(\mathbf{x})=\frac{x_2}{x_1}$.   An easy computation   shows that $G(\bar{\varphi} (\mathbf{x}))= 2 G(\mathbf{x})$ and so the expression \eqref{fi11}  follows from Lemma~\ref{lemtec}. 
Any other point ${\bf x}$ lies in some $C_{(a,b)}$ which is invariant under  $\varphi^{(4)}$. Computing  $\tilde{\varphi}:= \varphi^{(4)}\restr{C_{(a,b)}}$  we obtain 
\begin{equation}\label{eq10}
\tilde{\varphi}(\mathbf{x})=\frac{x_2^4}{x_1^4}\left(k_1x_1, k_2x_2, k_2x_3,\frac{k_2^2}{k_1}x_4\right),  \quad  \mathbf{x}=(x_1,x_2,x_3,x_4)\in C_{(a,b)},
\end{equation}
with $k_1$ and $k_2$ the constants  in  \eqref{constH0}.

The  expression of $\tilde{\varphi}(\mathbf{x})$ is again of the form $\tilde{\varphi}(\mathbf{x})= G(\mathbf{x}) D\mathbf{x}$ with  $G(\mathbf{x})=\frac{x_2^4}{x_1^4}$ and 
$D=\operatorname{diag}\left( k_1,k_2, k_2, \frac{k_2^2}{k_1}\right).$
 As
$$G(\tilde{\varphi}(\mathbf{x}))= \frac{k^4_2}{k^4_1} G(\mathbf{x}),
$$
it is straightforward from   Lemma~\ref{lemtec} to obtain \eqref{fiab}  from \eqref{eq10}.

From expressions \eqref{fi11} and \eqref{fiab} it is easy to conclude that $\varphi$ has no periodic points.

The  statement that each component of  $\varphi^{(n)}$ goes to infinity when $n\rightarrow +\infty$ follows from expression \eqref{fi11} when ${\bf x}\in C_{(1,1)}$,  and from the fact that the  constants $k_1,k_2$ and $\frac{k_2}{k_1}$ in \eqref{fiab} are strictly greater than 1 for ${\bf x}\in C_{(a,b)}$  when  $(a,b)\in \Rb^2_+\setminus\{(1,1)\}$. 

\end{proof}

\subsection{Dynamics of the map  \eqref{eq:dP3}}\label{sectiondP3}

 Consider now the map $\varphi$ in \eqref{eq:dP3}:
 $$\varphi(x_1,x_2,\ldots,x_6)=\left(x_3,x_4,x_5,x_6, \frac{x_2x_4+x_3x_5}{x_1}, \frac{x_1x_4x_6+x_2x_4x_5+x_3x_5^2}{x_1x_2}\right).$$

From Proposition~\ref{prop1} and Theorem~\ref{dynred},
$\pi\circ\varphi=\psi\circ\pi$ with
\begin{equation}\label{dP3eq2}
\psi(x,y) =\left(y,\frac{y}{x}\right), \quad \pi\left(x_1,x_2,\ldots,x_6 \right) =\left(\frac{x_2x_4}{x_3 x_5},\frac{x_1x_4 x_6}{x_5 (x_2 x_4+x_3x_5)}\right).
\end{equation}
Let again $P=(a,b)\in\Rb^2_+$ and $C_P=\left\{\mathbf{x}\in\Rb^6_+: \pi (\mathbf{x})=P\right\}$, that is
\begin{equation}\label{CdP3}
C_{(a,b)}= \left\{(x_1,x_2,\ldots,x_6)\in\Rb^6_+:\,\, x_4= a \frac{x_3 x_5}{x_2},\,\, x_6= \frac{b (a+1)}{a}\frac{x_2x_5}{x_1}\right\}.
\end{equation}

\begin{theorem}\label{dymdP3}
Let $\varphi$ be the map  \eqref{eq:dP3},  $C_{(a,b)}$ given by  \eqref{CdP3} and $n\in\mathbb{N}_0$. Then, 
\begin{enumerate}
\item $\varphi (C_{(1,1)}) = C_{(1,1)}$ and for any  $\mathbf{x}=(x_1,x_2,\ldots,x_6)\in C_{(1,1)}$ we have:
\begin{itemize}
\item if $n=2m$ then 
\begin{equation}
\label{f11par}
\varphi^{(n)}(\mathbf{x}) = \lambda^m\left(x_1, 2^mx_2, 2^mx_3,4^{m}x_4 ,4^{m}x_5,8^{m}x_6\right).
\end{equation}
\item if $n=2m+1$ then 
\begin{equation}
\label{f11impar}
\varphi^{(n)}(\mathbf{x}) =(2 \lambda)^m \left(x_3,2^{m}x_4 ,2^{m}x_5,8^{m}x_6,4^m \frac{2x_3x_5}{x_1},8^{m} \frac{4x_3x_5^2}{x_1x_2}\right)
\end{equation}
\end{itemize}
with $\lambda=2^{m-1}\frac{x_5}{x_1}$. 
\item  $\varphi^{(6)}(C_{(a,b)}) = C_{(a,b)}$ and  for any  $\mathbf{x}=(x_1,x_2,\ldots,x_6)\in C_{(a,b)}$ we have:
\begin{equation}
\label{fab}
\varphi^{(6n)}(\mathbf{x})=k_1^{3n(n-1)}k_2^n\left(\frac{x_5}{x_1}\right)^{3n}\left(x_1, k_1^nx_2, k_1^nx_3,k_1^{2n}x_4,k_1^{2n}x_5, k_1^{3n}x_6\right) ,
\end{equation}
with 
\begin{equation}\label{constdP3}
k_1=\frac{(a+1)(b+1)(a+b)}{ab},  \qquad k_2=\frac{(a+1)^3 (b+1)^2(a+b)}{a^2}.
\end{equation}
\end{enumerate}
Moreover, $\varphi$ has no periodic points and each  component of $\varphi^{(n)}(\mathbf{x})$ goes to $+\infty$ as $n\rightarrow +\infty$.
\end{theorem}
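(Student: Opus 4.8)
The plan is to mirror exactly the strategy used for Theorem~\ref{dymH0}, exploiting the fact that $\varphi$ in \eqref{eq:dP3} is semiconjugate, via $\pi$ in \eqref{dP3eq2}, to the globally $6$-periodic map $\psi(x,y)=(y,y/x)$, whose unique fixed point in $\Rb^2_+$ is $(1,1)$. First I would invoke Theorem~\ref{orbits}: since $(1,1)$ is a fixed point of $\psi$, the set $C_{(1,1)}$ is invariant under $\varphi$, and since every other $P=(a,b)$ is $6$-periodic, each $C_{(a,b)}$ is invariant under $\varphi^{(6)}$; invertibility of the birational map $\varphi$ upgrades both inclusions to equalities $\varphi(C_{(1,1)})=C_{(1,1)}$ and $\varphi^{(6)}(C_{(a,b)})=C_{(a,b)}$. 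This reduces everything to computing the restrictions $\bar\varphi=\varphi\restr{C_{(1,1)}}$ and $\tilde\varphi=\varphi^{(6)}\restr{C_{(a,b)}}$ and then iterating them.

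For part~1, I would substitute the defining relations of $C_{(1,1)}$ from \eqref{CdP3}, namely $x_4=x_3x_5/x_2$ and $x_6=2x_2x_5/x_1$, into the formula for $\varphi$ to get an explicit expression for $\bar\varphi$. The subtlety here, compared with the $F_0$ case, is that $\bar\varphi$ will not be of the scaling form $G(\mathbf{x})D\mathbf{x}$ on the nose — the coordinates get permuted, so one step of $\varphi$ maps $C_{(1,1)}$ to itself but shifts indices. The clean fix is to look at $\varphi^{(2)}\restr{C_{(1,1)}}$, which I expect to be of the form $G(\mathbf{x})D\mathbf{x}$ with $D=\operatorname{diag}(1,2,2,4,4,8)$ and $G(\mathbf{x})=\tfrac12(x_5/x_1)^2$ (so that two steps scale by $\lambda=2^{m-1}x_5/x_1$ as $m$ advances); checking $G(\varphi^{(2)}(\mathbf{x}))=c\,G(\mathbf{x})$ for the appropriate constant $c$ and applying Lemma~\ref{lemtec} yields the even-$n$ formula \eqref{f11par}. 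The odd-$n$ formula \eqref{f11impar} then follows by applying $\varphi$ once more to \eqref{f11par} and simplifying using the $C_{(1,1)}$ relations.

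For part~2, the same recipe applies: compute $\tilde\varphi=\varphi^{(6)}\restr{C_{(a,b)}}$ by repeatedly substituting the relations in \eqref{CdP3} (these relations are preserved along the orbit because $C_{(a,b)}$ is $\varphi^{(6)}$-invariant, but intermediate iterates lie in $C_{\psi^{(i)}(a,b)}$, so one must track how the parameters $(a,b)$ transform under $\psi$ at each of the six stages). The outcome should be $\tilde\varphi(\mathbf{x})=G(\mathbf{x})D\mathbf{x}$ with $G(\mathbf{x})=k_1^{?}k_2(x_5/x_1)^3$-type prefactor and $D=\operatorname{diag}(1,k_1,k_1,k_1^2,k_1^2,k_1^3)$, with $k_1,k_2$ as in \eqref{constdP3}; verifying the multiplicative cocycle condition $G(\tilde\varphi(\mathbf{x}))=k_1^{?}G(\mathbf{x})$ and feeding this into Lemma~\ref{lemtec} delivers \eqref{fab}. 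Finally, the absence of periodic points is immediate from \eqref{f11par}--\eqref{fab}, since the scaling factors are never simultaneously trivial, and the blow-up of every component follows because $k_1>1$ and $k_2>1$ strictly for $(a,b)\in\Rb^2_+$ (these are products of factors each exceeding $1$ by AM--GM-type estimates, e.g. $a+b\ge 2\sqrt{ab}$ and $(a+1)(b+1)>1$), while on $C_{(1,1)}$ the factor $2^{m-1}x_5/x_1$ times the geometric part grows. The main obstacle I anticipate is purely bookkeeping: correctly carrying out the six-fold composition $\varphi^{(6)}$ restricted to $C_{(a,b)}$, keeping track of the parameter shifts $(a,b)\mapsto\psi^{(i)}(a,b)$ at each stage and the resulting rational simplifications that collapse the long expression into the compact diagonal-scaling form; this is where an algebra-computation slip is most likely, so I would double-check it by verifying $\varphi^{(6)}\restr{C_{(a,b)}}$ is consistent with one extra application of the part-1 computation in the limit $(a,b)\to(1,1)$.
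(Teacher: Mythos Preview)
Your proposal follows essentially the same route as the paper's proof: invoke Theorem~\ref{orbits} plus birational invertibility for the invariance equalities, note that $\bar\varphi=\varphi\restr{C_{(1,1)}}$ is not in the form of Lemma~\ref{lemtec} so pass to $\bar\varphi^{(2)}$, apply Lemma~\ref{lemtec} to get \eqref{f11par}, compose once more for \eqref{f11impar}, and similarly reduce $\tilde\varphi=\varphi^{(6)}\restr{C_{(a,b)}}$ to the form $G(\mathbf{x})D\mathbf{x}$ with $D=\operatorname{diag}(1,k_1,k_1,k_1^2,k_1^2,k_1^3)$. The only slips are in your guessed prefactors: the paper's computation gives $\bar\varphi^{(2)}(\mathbf{x})=\tfrac{x_5}{x_1}(x_1,2x_2,2x_3,4x_4,4x_5,8x_6)$, so $G(\mathbf{x})=x_5/x_1$ (not $\tfrac12(x_5/x_1)^2$) with $G\circ\bar\varphi^{(2)}=4G$, and $\tilde\varphi(\mathbf{x})=k_2(x_5/x_1)^3\,D\mathbf{x}$ with $G\circ\tilde\varphi=k_1^6G$ (no stray $k_1$ in $G$); these are exactly the computational checks you flagged as the likely trouble spots, and once corrected your argument goes through verbatim.
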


\begin{proof}
The proof is done along the same lines of  the proof of the previous theorem. The map $\varphi$  is semiconjugate to the $6$-periodic map $\psi$ in \eqref{dP3eq2} and  $(1,1)$ is the unique fixed point  of $\psi$ in $\Rb^2_+$. Therefore, from items {\it 1.} and {\it 2.} of  Theorem~\ref{orbits}  it follows that $C_{(1,1)}$ is invariant under $\varphi$ and $C_{(a,b)}$ is invariant under  $\varphi^{(6)}$, for all $(a,b)\in\Rb^2_+$. The invertibility of $\varphi$ implies that $\varphi(C_{(1,1)})= C_{(1,1)}$ and $\varphi^{(6)}(C_{(a,b)})=C_{(a,b)}$.

Due to the invariance of $C_{(1,1)}$ under $\varphi$,  the general expression of $\varphi^{(n)}(\mathbf{x})$, with ${\bf x}=(x_1, x_2, \ldots, x_6)\in C_{(1,1)}$, can be computed  through the restriction of $\varphi$ to $C_{(1,1)}$. Let again $\bar{\varphi}$ denote the restriction $\varphi\restr{C_{(1,1)}}$,   that is
 \begin{equation}\label{nbar}
\bar{\varphi}(x_1, x_2, \ldots, x_6)=\left(x_3,x_4,x_5,x_6, \frac{2x_3x_5}{x_1}, \frac{4 x_3 x_5^2}{x_1 x_2}\right).
\end{equation}
 Since $\bar{\varphi}$ is not in the conditions of Lemma~\ref{lemtec},  we consider the map $\bar{\varphi}^{(2)}$ whose expression is 
$$\bar{\varphi}^{(2)}(\mathbf{x})=\frac{x_5}{x_1}\left(x_1,2x_2,2x_3,4x_4, 4x_5,8x_6\right).$$
This map can be written as $\bar{\varphi}^{(2)}(\mathbf{x})= G(\mathbf{x}) D\mathbf{x}$ with $D=\operatorname{diag} (1,2,2,4,4,8)$ and $G(\mathbf{x})=\frac{x_5}{x_1}$.  One  easily checks that $G(\bar{\varphi}^{(2)} (\mathbf{x}))= 4 G(\mathbf{x})$ and so by Lemma~\ref{lemtec}  the expression \eqref{f11par} follows.  Applying   $\bar{\varphi}$ given by \eqref{nbar} to \eqref{f11par}   the expression \eqref{f11impar}  is obtained.

Any other point ${\bf x}$ lies in some $C_{(a,b)}$, which is invariant under  $\varphi^{(6)}$. Computing   $\tilde{\varphi}:= \varphi^{(6)}\restr{C_{(a,b)}}$  we obtain
\begin{equation}\label{eq13}
\tilde{\varphi}(\mathbf{x})=k_2\frac{x_5^3}{x_1^3}\left(x_1, k_1x_2, k_1x_3,k_1^2x_4,k_1^2x_5,k_1^3x_6\right), 
\end{equation}
where $k_1$ and $k_2$ are the constants in \eqref{constdP3}.

The  expression of $\tilde{\varphi}(\mathbf{x})$ is again of the form $\tilde{\varphi}(\mathbf{x})= G(\mathbf{x}) D\mathbf{x}$ with  
$$G(\mathbf{x})=k_2\left (\frac{x_5}{x_1}\right )^3\quad   \mbox{and} \quad D=\operatorname{diag}\left( 1,k_1,k_1, k_1^2,k_1^2,k_1^3\right).$$
 As $G(\tilde{\varphi}(\mathbf{x}))= k_1^6 G(\mathbf{x})$, it is straightforward to obtain \eqref{fab} from Lemma~\ref{lemtec}.

From the expressions \eqref{f11par}, \eqref{f11impar} and \eqref{fab} it is easy to conclude that $\varphi$ has no periodic points.
The  conclusion that each component of  $\varphi^{(n)}$ goes to infinity when $n\rightarrow +\infty$ is an immediate consequence from the expressions \eqref{f11par} and \eqref{f11impar} when ${\bf x}\in C_{(1,1)}$. In the case  of ${\bf x}\in C_{(a,b)}$ with  $(a,b)\in \Rb^2_+\setminus\{(1,1)\}$ the conclusion follows from the fact that both $k_1$ and $k_2$ are   greater  than $2$ ( in fact $k_2$ is   greater than $3$).

\end{proof}

 \section{ First integrals and lifted dynamics}\label{sec5}

In the previous section we studied the dynamics of the maps \eqref{eq:H0} and \eqref{eq:dP3} by lifting the dynamics of  the globally periodic maps $\psi$  in \eqref{conj:H0} and \eqref{conj:dP3}. In particular, Theorem~\ref{orbits} guarantees that the  dynamics of those maps takes place on invariant sets $S_{P}$  defined by \eqref{sp} which are (the union of) algebraic varieties of dimension 2 in the case of \eqref{eq:H0} and of dimension 4 in the case of \eqref{eq:dP3}.

The  aim of this section is twofold: first we  obtain a better confinement (to algebraic subvarieties of dimension 2) of the orbits of the map \eqref{eq:dP3}  by finding  first integrals of  suitable restricted maps; second, we use the complete integrability of the globally periodic maps $\psi$ to produce first integrals of the maps \eqref{eq:H0} and \eqref{eq:dP3} and show how the common level sets of these first integrals relate to the invariant sets  $S_P$.

Recall that a {\it first integral} of the DDS generated by a map $f: U\subseteq \Rb^n\rightarrow U$,  or {\it a first integral of $f$}, is a non-constant real function $I$ which is constant on the orbits of the DDS.  That is,
$$
I(f(\mathbf{x}))= I(\mathbf{x}), \quad \forall\,\, \mathbf{x}\in U.
$$

The DDS generated by $f$ is called {\it completely integrable} if there are $n$ functionally independent first integrals of $f$. 

First integrals of  a map $f$  play an important role since they enable us to confine the orbits of the corresponding DDS. In fact,  if $I_1, \ldots, I_k$ are $k$ first integrals of $f$ then each orbit ${\cal O}_f({\bf x})$ stays on the common level set
$$\Sigma^{I}_{\bf c} = \{ {\bf z}\in U: I_1({\bf z}) = c_1, \ldots ,I_k({\bf z}) = c_k \}$$
with $c_j= I_j({\bf x})$, $j=1,\ldots ,k$. 

\subsection{First  integrals and the dynamics of  \eqref{eq:dP3}}

 As seen in the previous section, the dynamics of the map $\varphi$ in \eqref{eq:dP3} takes place either on $C_{(1,1)}$ or on 
$$S_P = \bigsqcup_{i=0}^{5}C_{\psi^{(i)}(P)}\qquad P\in \Rb^2_+\setminus\{(1,1)\}.$$
where $\psi (x,y) =(y, y/x)$ and $C_{P}$ is defined by \eqref{CdP3}.

$C_{(1,1)}$ is a 4-dimensional algebraic variety of $\Rb^6_+$ which is invariant under the map $\varphi$, and the orbit under $\varphi$ of a point ${\bf x}\notin C_{(1,1)}$ circulates through six pairwise disjoint algebraic varieties $C_{\psi^{(i)}(a,b)}$ of dimension 4  as schematized below. 
\begin{center}
\begin{tikzpicture}[scale=1][>=latex]
    \def\radius{1.7cm} 
    \node (h0A) at (60:\radius)   {$C_{(b,\frac{b}{a})}$};
    \node (h0C) at (0:\radius)    {$C_{(\frac{b}{a},\frac{1}{a})}$};
    \node (h1B) at (-60:\radius)  {$C_{(\frac{1}{a},\frac{1}{b})}$};
    \node (h1A) at (-120:\radius) {$C_{(\frac{1}{b},\frac{a}{b})}$};
    \node (h1C) at (180:\radius)  {$C_{(\frac{a}{b},\frac{1}{a})}$};
    \node (h0B) at (120:\radius)  {$C_{(a,b)}$};

    \path[->,font=\small]
        (h0A) edge node[auto] {$\varphi$} (h0C)
        (h0C) edge node[auto] {$\varphi$} (h1B)
        (h1B) edge node[auto] {$\varphi$} (h1A)
        (h1A) edge node[auto] {$\varphi$} (h1C)
        (h1C) edge node[auto] {$\varphi$} (h0B)
        (h0B) edge node[auto] {$\varphi$} (h0A);
\end{tikzpicture}

\end{center}
 Note that each $C_{\psi^{(i)}(a,b)}$ is invariant under $\varphi^{(6)}$. 

 In the next proposition we  show that the restrictions of  $\varphi$  to $C_{(1,1)}$ and of $\varphi^{(6)}$  to $C_{(a,b)}$ admit two independent first integrals. These first integrals allow us to show that  the orbits of the map \eqref{eq:dP3} are confined to algebraic varieties of dimension 2 (cf. Corollary~\ref{cor1}  below).

\begin{proposition}\label{prop4} Let $\varphi$ be the map \eqref{eq:dP3}   and $C_{(a,b)}$ the  4-dimensional algebraic  variety defined by \eqref{CdP3}. Then,
\begin{itemize}
\item[i)] the map $\bar{\varphi}=\varphi\restr{C_{(1,1)}}$ has the following  first integrals: 
\begin{equation}\label{int11}
\bar{I}_1(\mathbf{x}) = \frac{x_2}{x_3}+\frac{x_3}{x_2}  \quad \mbox{and} \quad \bar{I}_2(\mathbf{ x}) = \frac{2x_2^2}{x_1x_5}+\frac{x_1x_5}{x_2^2}.
\end{equation}
\item[ii)] each restriction $\varphi^{(6)}\restr{C_{\psi^{(i)}(a,b)}}$ has the following first integrals: 
\begin{equation}\label{intab}
\tilde{I}_1(\mathbf{ x}) = \frac{x_3}{x_2}  \quad \mbox{and} \quad \tilde{I}_2(\mathbf{ x}) = \frac{x_1x_5}{x_2^2}.
\end{equation}
\end{itemize}
\end{proposition}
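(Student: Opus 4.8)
The plan is to verify each claimed first integral directly by evaluating it on the appropriate restricted map and checking invariance; this is computational but well-organized once the right restricted maps are written down.

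First I would establish the explicit form of the restricted maps. For part \emph{i)}, I use the expression for $\bar\varphi = \varphi\restr{C_{(1,1)}}$ already computed in \eqref{nbar}, namely
\[
\bar\varphi(x_1,\dots,x_6) = \left(x_3, x_4, x_5, x_6, \frac{2x_3x_5}{x_1}, \frac{4x_3x_5^2}{x_1x_2}\right).
\]
Then $\bar I_1(\bar\varphi(\mathbf{x})) = \frac{x_4}{x_5} + \frac{x_5}{x_4}$, which is \emph{not} obviously $\bar I_1(\mathbf{x}) = \frac{x_2}{x_3}+\frac{x_3}{x_2}$ — so the key point is that on $C_{(1,1)}$ the defining relations \eqref{CdP3} with $a=b=1$ force $x_4 = x_3x_5/x_2$, whence $\frac{x_4}{x_5} = \frac{x_3}{x_2}$ and the two expressions agree. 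Similarly $\bar I_2(\bar\varphi(\mathbf{x})) = \frac{2x_4^2}{x_3 x_6} + \frac{x_3x_6}{x_4^2}$, and using both relations $x_4 = x_3x_5/x_2$ and $x_6 = 2x_2x_5/x_1$ (the $a=b=1$ case of \eqref{CdP3}) one reduces $\frac{2x_4^2}{x_3x_6}$ to $\frac{x_1x_5}{x_2^2}$ and $\frac{x_3x_6}{x_4^2}$ to $\frac{2x_2^2}{x_1x_5}$, giving $\bar I_2(\bar\varphi(\mathbf{x})) = \bar I_2(\mathbf{x})$. Functional independence of $\bar I_1, \bar I_2$ on the $4$-dimensional variety $C_{(1,1)}$ follows by exhibiting a point where the differentials (in suitable coordinates on $C_{(1,1)}$, e.g. $x_1,x_2,x_3,x_5$) are linearly independent, or simply by noting that $\bar I_1$ depends only on $x_2/x_3$ while $\bar I_2$ depends on $x_1x_5/x_2^2$ and these two ratios are independent functions.

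For part \emph{ii)}, I would first obtain the expression of $\tilde\varphi = \varphi^{(6)}\restr{C_{(a,b)}}$, which is \eqref{eq13}:
\[
\tilde\varphi(\mathbf{x}) = k_2\frac{x_5^3}{x_1^3}\left(x_1, k_1 x_2, k_1 x_3, k_1^2 x_4, k_1^2 x_5, k_1^3 x_6\right).
\]
Since $\tilde\varphi$ scales all components by an overall factor and additionally multiplies $x_2,x_3$ by $k_1$, $x_1,x_5$ by $1$ and $k_1^2$ respectively, the ratio $x_3/x_2$ is unchanged: $\tilde I_1(\tilde\varphi(\mathbf{x})) = \frac{k_1 x_3}{k_1 x_2} = \tilde I_1(\mathbf{x})$. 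Likewise $\frac{x_1 x_5}{x_2^2}$ is sent to $\frac{x_1 \cdot k_1^2 x_5}{(k_1 x_2)^2} = \frac{x_1 x_5}{x_2^2}$, so $\tilde I_2$ is invariant. The same computation works verbatim on each $C_{\psi^{(i)}(a,b)}$ because that variety is again of the form \eqref{CdP3} (with parameters $\psi^{(i)}(a,b)$) and the restriction of $\varphi^{(6)}$ to it has the same structural form \eqref{eq13} with the corresponding constants; alternatively one conjugates by the relevant power of $\varphi$. Independence of $\tilde I_1,\tilde I_2$ is immediate since $x_3/x_2$ and $x_1x_5/x_2^2$ are algebraically independent.

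The main obstacle — though a mild one — is bookkeeping: one must be careful to substitute the defining equations of $C_{(a,b)}$ \emph{before} claiming invariance, since neither $\bar I_1$ nor $\bar I_2$ is a first integral of $\varphi$ on all of $\Rb^6_+$; the identities hold only on the respective invariant varieties. A secondary point worth spelling out is why it suffices to treat $C_{(a,b)}$ and not each of the six varieties $C_{\psi^{(i)}(a,b)}$ separately: because $\psi^{(i)}(a,b)$ ranges over $\Rb^2_+\setminus\{(1,1)\}$ as $(a,b)$ does, and the statement is being asserted for arbitrary $(a,b)\neq(1,1)$, so the single computation on $C_{(a,b)}$ covers all cases. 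I would close by remarking that these are genuinely two independent first integrals on a $4$-dimensional variety, which is exactly what is needed to cut the orbits down to dimension $2$ in Corollary~\ref{cor1}.
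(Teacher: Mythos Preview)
Your approach is essentially the paper's: verify invariance of the claimed functions by direct computation on the restricted maps. The paper streamlines this slightly by passing to intrinsic coordinates $(x_1,x_2,x_3,x_5)$ on $C_{(a,b)}$ and writing $\bar\varphi$ and $\tilde\varphi$ as maps $\Rb_+^4\to\Rb_+^4$, which absorbs the defining relations of $C_{(a,b)}$ once and for all instead of substituting them at the end; but this is a cosmetic difference.

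There is, however, a computational slip in your verification of $\bar I_2$. From \eqref{nbar} the fifth component of $\bar\varphi(\mathbf{x})$ is $2x_3x_5/x_1$, \emph{not} $x_6$, so
\[
\bar I_2(\bar\varphi(\mathbf{x})) \;=\; \frac{2x_4^2}{x_3\cdot \frac{2x_3x_5}{x_1}} + \frac{x_3\cdot \frac{2x_3x_5}{x_1}}{x_4^2}
\;=\; \frac{x_1x_4^2}{x_3^2x_5} + \frac{2x_3^2x_5}{x_1x_4^2},
\]
and only the relation $x_4=x_3x_5/x_2$ is needed to reduce this to $\bar I_2(\mathbf{x})$. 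Your displayed expression $\frac{2x_4^2}{x_3x_6}+\frac{x_3x_6}{x_4^2}$ and the subsequent reduction are incorrect: substituting $x_4=x_3x_5/x_2$ and $x_6=2x_2x_5/x_1$ into $\frac{2x_4^2}{x_3x_6}$ yields $\frac{x_1x_3x_5}{x_2^3}$, not $\frac{x_1x_5}{x_2^2}$. The error is harmless once the correct fifth component is used, and your treatment of $\bar I_1$ and of part \emph{ii)} is fine.
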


\begin{proof} 
We remark that  $(x_1,x_2,x_3,x_5)$ can be taken as coordinates in any $C_{(a,b)}$. 
The expression of the restriction $\bar{\varphi} =\varphi\restr{C_{(1,1)}}$ in  these coordinates is given by
$$\bar{\varphi} (x_1, x_2, x_3, x_5)=\left(x_3, \frac{x_3x_5}{x_2}, x_5, 2\frac{x_3x_5}{x_1}\right).$$
Using this expression it is immediate to check that $\bar{I}_1$ and $\bar{I}_2$ are first integrals of $\bar{\varphi}$.

 Analogously, the expression of  $\tilde{\varphi} =\varphi^{(6)}\restr{C_{(a,b)}}$ in the same coordinates  is given by
$$\tilde{\varphi} (x_1, x_2, x_3, x_5)=k_2\frac{x_5^3}{x_1^3}\left(x_1, k_1x_2, k_1x_3, k_1^2x_5\right),$$
where $k_1$ and $k_2$ are the constants in \eqref{constdP3}.  From the above expression of $\tilde{\varphi}$ it is easy to check that  $\tilde{I}_1$ and $\tilde{I}_2$ are first integrals of this map.

A similar argument applies to the restriction of $\varphi^{(6)}$ to $C_{\psi^{(i)}(a,b)}$. One just has to replace the values of $a,b, k_1$ and $k_2$ accordingly. 
\end{proof}

 \begin{remark} We note that the integrals $\tilde{I}_1$ and $\tilde{I}_2$ in \eqref{intab} are independent  everywhere  on $ C_{(a,b)}$ and that the integrals $\bar{I}_1$ and $\bar{I}_2$ in \eqref{int11} are independent on the dense open set 
$$C_{(1,1)}\setminus\left (\left\{x_2=x_3\right\}\cup \left\{ x_1x_5 = \sqrt{2} x_2^2\right\}\right ).
$$
\end{remark}

\bigbreak

\begin{corollary}\label{cor1}  Let $\mathbf{x}$ be a point in $\Rb^6_+$ and $\mathcal{O}_\varphi (\mathbf{x})$   the orbit of $\mathbf{x}$ under the map \eqref{eq:dP3}. Then one of the following holds:
\begin{enumerate}
\item $\mathcal{O}_\varphi (\mathbf{x})$  is contained in the following  2-dimensional algebraic variety:
$$\left\{ {\bf z} \in\Rb^6_+: \,\, z_4= \frac{z_3 z_5}{z_2},\,\, z_6= 2\frac{z_2z_5}{z_1}, \,\,z_2^2 + z_3 ^2 =c z_2 z_3, \, 2z_2^4+z_1^2z_5^2= d z_1z_2^2 z_5\right\},$$
where $c, d$ are constants determined by ${\bf x}$.
\item $\mathcal{O}_\varphi (\mathbf{x})$ circulates through six pairwise disjoint  2-dimensional algebraic varieties as follows
$$D_{(a,b,c,d)}\stackrel{\varphi}{\longrightarrow} D_{h(a,b,c,d)} \stackrel{\varphi}{\longrightarrow} \cdots\cdots  \stackrel{\varphi}{\longrightarrow} D_{h^{(5)}(a,b,c,d)}\stackrel{\varphi}{\longrightarrow}D_{(a,b,c,d)}$$
where
 $$D_{(a,b,c,d)}=\left\{{\bf z}\in\Rb^6_+: z_4= a \frac{z_3 z_5}{z_2},\, z_6= \frac{b (a+1)}{a}\frac{z_2z_5}{z_1},\, z_3 =c z_2,\, z_1 z_5= d z_2^2\right\},$$
the constants $a,b,c,d$ are determined by $\mathbf{x}$  with $(a,b)\neq (1,1)$, and $h$ is the following globally 6-periodic map
\begin{equation}
\label{h}
h(\alpha,\beta,\gamma,\delta)=\left(\beta, \frac{\beta}{\alpha}, \frac{1}{\alpha\gamma}, \frac{\alpha+1}{\alpha^2\delta}\right).
\end{equation}
\end{enumerate}
\end{corollary}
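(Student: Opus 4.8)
The plan is to combine the orbit-confinement structure already established in Theorem~\ref{orbits} and Theorem~\ref{dymdP3} with the first integrals from Proposition~\ref{prop4}. Recall from Section~\ref{sectiondP3} that any point $\mathbf{x}\in\Rb^6_+$ either lies in $C_{(1,1)}$ or lies in some $C_{(a,b)}$ with $(a,b)=\pi(\mathbf{x})\neq(1,1)$, and that in the latter case the orbit $\mathcal{O}_\varphi(\mathbf{x})$ circulates through the six pairwise disjoint varieties $C_{\psi^{(i)}(a,b)}$, each of which is invariant under $\varphi^{(6)}$; here $\psi(x,y)=(y,y/x)$. So there are exactly the two cases in the statement, and in each case I would intersect the relevant $C$-variety (or cyclically permuted family of them) with the common level sets of the appropriate first integrals.

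For item 1, suppose $\mathbf{x}\in C_{(1,1)}$. Then $\mathcal{O}_\varphi(\mathbf{x})\subset C_{(1,1)}$ by Theorem~\ref{orbits}(1), and by Proposition~\ref{prop4}(i) the functions $\bar I_1=\frac{x_2}{x_3}+\frac{x_3}{x_2}$ and $\bar I_2=\frac{2x_2^2}{x_1x_5}+\frac{x_1x_5}{x_2^2}$ are constant along $\mathcal{O}_\varphi(\mathbf{x})$. Setting $c=\bar I_1(\mathbf{x})$ and $d=\bar I_2(\mathbf{x})$, clearing denominators in $\bar I_1=c$ and $\bar I_2=d$ gives the two polynomial relations $z_2^2+z_3^2=c\,z_2z_3$ and $2z_2^4+z_1^2z_5^2=d\,z_1z_2^2z_5$; together with the two defining equations of $C_{(1,1)}$ (obtained from \eqref{CdP3} with $a=b=1$, namely $z_4=z_3z_5/z_2$ and $z_6=2z_2z_5/z_1$) this is precisely the claimed variety. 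Its dimension is $2$: inside the $4$-dimensional $C_{(1,1)}$, parametrized by $(z_1,z_2,z_3,z_5)$, the two independent integrals cut out a $2$-dimensional set (independence holds on a dense open subset by the Remark following Proposition~\ref{prop4}, and a dimension count on that open set suffices).

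For item 2, suppose $(a,b)=\pi(\mathbf{x})\neq(1,1)$. Write $D_{(a,b,c,d)}$ for the intersection of $C_{(a,b)}$ with the level sets $\tilde I_1=c$, $\tilde I_2=d$, where $\tilde I_1=z_3/z_2$ and $\tilde I_2=z_1z_5/z_2^2$; by Proposition~\ref{prop4}(ii) each $\varphi^{(6)}\restr{C_{\psi^{(i)}(a,b)}}$ preserves these, so the orbit is confined to the union of the six sets $D$ obtained from the six parameter tuples $\psi^{(i)}(a,b)$ together with the corresponding constant values of $\tilde I_1,\tilde I_2$ along that branch. It remains to identify how the four parameters transform under one application of $\varphi$: the first two transform by $\psi$, i.e.\ $(a,b)\mapsto(b,b/a)$, while the effect on $(c,d)=(z_3/z_2,\;z_1z_5/z_2^2)$ must be read off from the explicit iterate in Theorem~\ref{dymdP3} (or, more directly, from the restriction $\bar\varphi$ written in the coordinates $(x_1,x_2,x_3,x_5)$ as in the proof of Proposition~\ref{prop4}, suitably adapted to general $(a,b)$). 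A short computation with $\varphi(x_1,\dots,x_6)=(x_3,x_4,x_5,x_6,\dots)$ restricted to $C_{(a,b)}$ shows the new value of $z_3/z_2$ is $1/(c\,a)$ and the new value of $z_1z_5/z_2^2$ is $(a+1)/(a^2 d)$, giving exactly the map $h(\alpha,\beta,\gamma,\delta)=(\beta,\beta/\alpha,1/(\alpha\gamma),(\alpha+1)/(\alpha^2\delta))$. Since the $(a,b)$-part of $h$ is $\psi$, which is $6$-periodic, and since $\psi$ has no fixed point other than $(1,1)$ which is excluded, the six sets $D_{h^{(i)}(a,b,c,d)}$ are pairwise disjoint (they already have distinct $C$-labels), and $h^{(6)}=\mathrm{Id}$ follows by direct verification or by noting $h$ is conjugate in an obvious way to a periodic map; this closes the cycle.

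The main obstacle is the explicit computation of the action of $\varphi$ on the pair $(c,d)$ of integral values, i.e.\ verifying that it is governed precisely by the last two components of $h$ in \eqref{h}; this requires carefully tracking the restricted map on $C_{(a,b)}$ for \emph{general} $(a,b)$ (Proposition~\ref{prop4} only records the restriction for $\varphi^{(6)}$, and for $\varphi$ itself only in the $(1,1)$ case), and checking consistency of $\tilde I_1,\tilde I_2$ being genuine $\varphi^{(6)}$-integrals on each of the six branches. Everything else — the case split, the dimension counts, and the $6$-periodicity of $h$ — is routine given Theorems~\ref{orbits}, \ref{dymdP3} and Proposition~\ref{prop4}.
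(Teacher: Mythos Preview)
Your proposal is correct and follows essentially the same route as the paper: split according to whether $\pi(\mathbf{x})=(1,1)$, use Proposition~\ref{prop4} to intersect the $C$-varieties with level sets of the appropriate first integrals, and for item~2 compute directly that $\varphi$ sends $D_{(\alpha,\beta,\gamma,\delta)}$ into $D_{h(\alpha,\beta,\gamma,\delta)}$. The paper's proof is in fact terser than yours---it simply asserts that ``an easy computation shows'' the inclusion $\varphi(D_{(\alpha,\beta,\gamma,\delta)})\subset D_{h(\alpha,\beta,\gamma,\delta)}$ and that $h$ is globally $6$-periodic---so the step you flag as the main obstacle is exactly the one computation the paper leaves to the reader, and your values $1/(\alpha\gamma)$ and $(\alpha+1)/(\alpha^2\delta)$ are correct.
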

\begin{proof}
 Recall that each $\mathbf{x}\in\Rb^6_+$ belongs to a set of the form $C_{(a,b)}$ as defined in \eqref{CdP3} by the equations 
$$x_4= a \frac{x_3 x_5}{x_2},\,\, x_6= \frac{b (a+1)}{a}\frac{x_2x_5}{x_1}.$$

If $(a,b)=(1,1)$ then the orbit $\mathcal{O}_\varphi (\mathbf{x})$ is contained in the variety described in {\it 1.} as a straightforward consequence of the fact that $\varphi(C_{(1,1)}) \subset C_{(1,1)}$ and of Proposition~\ref{prop4}-{\it i)}.

If $(a,b)\neq (1,1)$ then the orbit of $\mathbf{x}$ circulates through the algebraic varieties  
$$C_{(a,b)}, C_{(b,\frac{b}{a})}, C_{(\frac{b}{a},\frac{1}{a})}, C_{(\frac{1}{a},\frac{1}{b})}, C_{(\frac{1}{b},\frac{a}{b})}, C_{(\frac{a}{b},a)},$$ 
which are pairwise disjoint and invariant under $\varphi^{(6)}$. Proposition~\ref{prop4}-{\it ii)} then implies that the intersection of $\mathcal{O}_\varphi (\mathbf{x})$ with each $C_{(\alpha,\beta)}$ lies on a set of the form 
$$D_{(\alpha,\beta,\gamma,\delta)}=\{ {\bf z} \in C_{(\alpha,\beta)}: \,\, z_2 =\gamma z_3,\,  z_1 z_5=\delta z_2^2\}.$$
An easy computation shows that $\varphi(D_{(\alpha,\beta,\gamma,\delta)})\subset D_{h(\alpha,\beta,\gamma,\delta)}$ where $h$ is given by \eqref{h}. Finally note that $h$ is globally 6-periodic, as expected.

\end{proof}

\subsection{Lifted first integrals  of  \eqref{eq:H0} and \eqref{eq:dP3}}

In this subsection we explore the fact that the maps \eqref{eq:H0} and \eqref{eq:dP3} are semiconjugate to globally periodic maps in order   to: (a) obtain  a set of  two independent first integrals of \eqref{eq:H0} and of  \eqref{eq:dP3}; (b) relate the common level sets of those first integrals to the invariant sets $S_P$ described in the previous sections.

Recall that globally $m$-periodic maps are completely integrable  and it is easy to construct maximal sets of  independent first integrals of such maps, for instance by taking into account  the algebra of all symmetric polynomials in $m$ variables  as in  Cima {\it et.al.} \cite{Cima1}. Also each set of independent first integrals of a map can be lifted to a set of independent first integrals of a (semi)conjugate map. In fact,   if $f$ is (semi)conjugate to $g$ by $\pi$ (i.e. $\pi\circ f=g\circ\pi$) and $I$ is a first integral of $g$ then 
$$I\circ\pi\circ f=I\circ g\circ\pi = I\circ\pi,$$
which means  that $J=I\circ \pi$ is a first integral of $f$. We will refer to $J$ as a \emph{lifted first integral} of $f$. 

The maps  \eqref{eq:H0} and \eqref{eq:dP3} are semiconjugate to the globally periodic maps $\psi$ given in   Theorem~\ref{dynred}  by \eqref{conj:H0} and  \eqref{conj:dP3} respectively. Therefore we can compute two independent first integrals $J_1$ and $J_2$  of \eqref{eq:H0} and \eqref{eq:dP3} by lifting two independent first integrals $I_1$ and $I_2$ of the respective map $\psi$.  Consequently  the orbits under these maps are entirely contained on  common level sets  of the lifted first integrals, that is on  sets of the form
 
$$
 \Sigma_{\bf c}^J=\left\{\mathbf{z}: \, J_1(\mathbf{z})= c_1, \, J_2(\mathbf{z})= c_2\right\}.
$$

 Note that each invariant set $S_P$,  as defined  in  Theorem~\ref{orbits},  is contained in a set $\Sigma_{\bf c}^J$, for any lifted first integrals $J_1=I_1\circ\pi$ and $J_2=I_2\circ\pi$. 
 In fact, if ${\bf z}$ is a point  of $S_P$ with $P=\pi({\bf x})$,  then we have  $\pi ({\bf z})= \psi^{(i)} (P)$  for some $i\in \{0, \ldots , m-1\}$ and so
\begin{equation}\label{note}
J_k({\bf z}) = I_k (\pi ({\bf z})) = I_k (\psi^{(i)} (P)) = I_k (P) = J_k({\bf x}),
\end{equation}
 which means that ${\bf z}$ belongs to the common level set of the integrals $J_1$ and $J_2$ containing ${\bf x}$. 

Our objective now is to give a more precise description of the  relation of the invariant sets $S_P$ with $\Sigma_{\bf c}^J$ for a specific choice of lifted first integrals $J_1$ and $J_2$ of the map \eqref{eq:H0}. A similar description can be made in the case of the map \eqref{eq:dP3} (see Remark \ref{remdP3} below).
\bigbreak

\noindent{\bf Lifted first integrals of the map \eqref{eq:H0}}
\medskip

Let  $\psi$  be the  globally 4-periodic map  given in   \eqref{conj:H0},  that is
$\psi(x,y)=\left(y,\frac{1}{x}\right)$,
and let us consider the following first integrals of $\psi$:
\begin{equation}\label{integrais0}
I_1=x+y+\frac{1}{x}+\frac{1}{y}, \quad I_2=x y+\frac{1}{xy}+\frac{x}{y}+\frac{y}{x}.
\end{equation}
These first integrals are functionally independent on $\Rb_+^2\setminus L$ where $L\subset \Rb_+^2$ is the vanishing locus of the Jacobian 
\begin{equation}\label{jacob1}
\det Jac(I_1, I_2)= \frac{(x-y) (x y-1) (x^2-1) (y^2-1)}{x^3 y^3}.
\end{equation}
 
The first integrals   $I_1$ and $I_2$ lift to first integrals $J_1=I_1\circ \pi$ and $J_2=I_2\circ \pi$ of the map \eqref{eq:H0} where $\pi$ stands for the semiconjugacy  \eqref{H0eq1}. More precisely, the lifted first integrals of \eqref{eq:H0} are
\begin{equation}\label{liftH0}
J_1= \frac{x_1^2x_4^2+(x_2^2+x_3^2)^2}{x_1x_2x_3x_4}, \quad J_2= \frac{x_1^2x_4^2}{(x_3^2+x_2^2)^2}+\frac{(x_3^2+x_2^2)^2}{x_1^2x_4^2}+\frac{x_3^2}{x_2^2}+\frac{x_2^2}{x_3^2}.
\end{equation}

\bigbreak

Consequently,  the orbit ${\cal O}_\varphi({\bf{x}})$ of ${\bf x}\in\Rb^4_+$ under the map \eqref{eq:H0} stays on the algebraic variety of dimension 2  

\begin{equation}\label{novo}
\Sigma_{\bf c}^J =\{ {\bf z}\in \Rb_+^4: J_1({\bf z}) = c_1, \, J_2({\bf z}) = c_2\},
\end{equation}
where ${\bf c}=(J_1({\bf x}),J_2({\bf x}))$.

We now consider   the  invariant set described in Theorem~\ref{orbits}:
\begin{equation}\label{SPH0}
S_P= \bigsqcup_{i=0}^{3}C_{\psi^{(i)}(P)},
\end{equation}
 where $P=\pi(\mathbf{x})$ 
 and $C_Q$ is defined by \eqref{CH0}.
 As seen above $S_P$ is contained in $\Sigma_{\bf c}^J$, 
 and the next proposition provides new insights into the relationship between these two sets.
 
 \begin{proposition}\label{SSigma}
 Let $\Sigma^{J}_{\mathbf{c}}$ be the common level set \eqref{novo} of the (lifted) first integrals \eqref{liftH0} of the map \eqref{eq:H0} and  let $S_P$ be defined by \eqref{SPH0}.  Then,
 
$$\Sigma^{J}_{\mathbf{c}}=S_P\cup S_{\sigma(P)},$$
where $\sigma$  is the reflection with respect to the line $y=x$.
 
 In particular, if  $L$ is the vanishing locus of the Jacobian \eqref{jacob1}  the  following holds: 
 \begin{itemize}
 \item[i)] if $P\in L$ then $\Sigma^{J}_{\mathbf{c}}=S_P$.
 \item[ii)] if $P\notin L$ then  $\Sigma^{J}_{\mathbf{c}}=S_P\sqcup S_{\sigma(P)}$.
 \end{itemize} 
\end{proposition}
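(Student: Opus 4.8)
The strategy is to transport the problem down to $\Rb_+^2$ via the semiconjugacy $\pi$ of \eqref{H0eq1}, work out exactly which points of $\Rb_+^2$ share a given value of the pair $(I_1,I_2)$, and then pull the answer back up. Recall that $J_k = I_k\circ\pi$ and that $\pi(C_Q)=\{Q\}$ by the very definition of $C_Q$ in \eqref{CH0}; hence a point $\mathbf{z}\in\Rb_+^4$ lies in $\Sigma^{J}_{\mathbf{c}}$ if and only if $\pi(\mathbf{z})$ lies in the fibre of $(I_1,I_2)$ over $\mathbf{c}$. Since $\Rb_+^4 = \bigsqcup_{Q\in\Rb_+^2} C_Q$ and each $S_P = \bigsqcup_{i=0}^{3} C_{\psi^{(i)}(P)}$, the proposition reduces to the purely two-dimensional claim
\[
\left\{Q\in\Rb_+^2 : I_1(Q)=c_1,\ I_2(Q)=c_2\right\} = \mathcal{O}_\psi(P)\cup\mathcal{O}_\psi(\sigma(P)),
\]
where $\mathbf{c}=(I_1(P),I_2(P))$ and $\sigma(x,y)=(y,x)$.

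\textbf{Key steps.} First I would observe that $\psi$ is globally $4$-periodic, so $\mathcal{O}_\psi(P)=\{P,\psi(P),\psi^{(2)}(P),\psi^{(3)}(P)\}$ has at most $4$ elements, and likewise for $\sigma(P)$; moreover $\sigma$ itself leaves the level sets of $I_1$ and $I_2$ invariant (both $I_1$ and $I_2$ in \eqref{integrais0} are manifestly symmetric under $(x,y)\mapsto(y,x)$), and one checks directly that $\sigma\psi\sigma = \psi^{-1}$, so the eight points $\mathcal{O}_\psi(P)\cup\mathcal{O}_\psi(\sigma(P))$ are automatically contained in the fibre. The containment ``$\subseteq$'' is the substantive direction: given $Q=(u,v)$ with $I_1(Q)=c_1$, $I_2(Q)=c_2$, I would treat these as two equations in $(u,v)$. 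Writing $s=u+\tfrac1u$, $t=v+\tfrac1v$ one gets $c_1 = s+t$, and $I_2 = uv+\tfrac1{uv}+\tfrac uv+\tfrac vu$ rearranges (using $uv+\tfrac1{uv}=(u+\tfrac1u)(v+\tfrac1v)-\tfrac uv-\tfrac vu$... no — more cleanly $uv+\tfrac1{uv}+\tfrac uv+\tfrac vu = (u+\tfrac1u)(v+\tfrac1v) = st$) to $c_2 = st$. Hence $s$ and $t$ are the two roots of the fixed quadratic $T^2-c_1 T + c_2 = 0$, so the unordered pair $\{s,t\}$ is determined by $\mathbf{c}$; then $u$ (resp. $v$) is a root of $\xi^2-s\xi+1=0$ (resp. with $t$), giving at most two positive choices for each, and the unordered pair $\{s,t\}$ together with these quadratic ambiguities yields exactly the $8$ candidate points, which one identifies with $\mathcal{O}_\psi(P)\cup\mathcal{O}_\psi(\sigma(P))$ by listing $\psi^{(i)}(x,y)=(x,y),(y,\tfrac1x),(\tfrac1x,\tfrac1y),(\tfrac1y,x)$ and their $\sigma$-images.

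\textbf{Finishing and the degenerate case.} To get the clean statement $\Sigma^J_{\mathbf c} = S_P\cup S_{\sigma(P)}$ at the level of $\Rb_+^4$ I just apply $C_{(-)}$ to the two-dimensional equality and use $\Rb_+^4=\bigsqcup_Q C_Q$ together with $S_{\sigma(P)}=\bigsqcup_i C_{\psi^{(i)}(\sigma(P))}$ and the relation $\sigma\psi^{(i)}\sigma=\psi^{(-i)}$, which shows $\mathcal{O}_\psi(\sigma(P))=\sigma(\mathcal{O}_\psi(P))$. For the dichotomy, note that $S_P=S_{\sigma(P)}$ exactly when $\sigma(P)\in\mathcal{O}_\psi(P)$; running through the four points $\psi^{(i)}(P)$ and equating them to $(y,x)$ produces precisely the four factors of the Jacobian \eqref{jacob1}, namely $y=x$, $xy=1$, $x^2=1$, $y^2=1$ — i.e. $\sigma(P)\in\mathcal{O}_\psi(P)\iff P\in L$. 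When $P\in L$ the two orbits coincide, $\Sigma^J_{\mathbf c}=S_P$ (case \emph{i}); when $P\notin L$ the eight points are distinct, so the four sets $C_{\psi^{(i)}(P)}$ are disjoint from the four sets $C_{\psi^{(i)}(\sigma(P))}$, giving the disjoint union $S_P\sqcup S_{\sigma(P)}$ (case \emph{ii}). The main obstacle is the algebraic bookkeeping in the ``$\subseteq$'' step — making the substitution $\{c_1,c_2\}\leftrightarrow\{s,t\}$ airtight and checking that no positive solution $(u,v)$ escapes the list of eight — but once the symmetric-function reformulation $c_1=s+t$, $c_2=st$ is in hand this is routine.
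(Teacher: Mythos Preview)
Your proof is correct and follows the same overall route as the paper: reduce via $\pi$ to the two-dimensional fibre $\{Q:I_1(Q)=c_1,\ I_2(Q)=c_2\}$, identify this fibre with $\mathcal{O}_\psi(P)\cup\mathcal{O}_\psi(\sigma(P))$, and then analyse when the two orbits coincide. The paper writes down the two polynomial equations in $(u,v)$ coming from $I_1=c_1$, $I_2=c_2$ and simply asserts (``it is not difficult to show'') that the solution set is the explicit list of eight points; your substitution $s=u+\tfrac1u$, $t=v+\tfrac1v$ and the observation $I_1=s+t$, $I_2=st$ is a genuinely cleaner way to justify that step, since it reduces the problem to two quadratics whose ambiguities visibly produce the $\psi$-orbit and its $\sigma$-image. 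Likewise, your explicit check that $\sigma(P)\in\mathcal{O}_\psi(P)$ occurs precisely along the four factors $y=x$, $xy=1$, $x=1$, $y=1$ of the Jacobian is more detailed than the paper's treatment of the dichotomy, which states the equivalence without spelling out the case analysis. So the argument is the same in structure, but your version fills in the two places the paper leaves to the reader.
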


\begin{proof} 
 From \eqref{note},  $\mathbf{z}\in \Sigma^{J}_{\mathbf{c}}$ if and only if   $(u,v)=\pi({\bf z})$ satisfies $I_k(u,v)=I_k(P)$  where $I_k$ are the first integrals in \eqref{integrais0}. Considering $P=(a,b)$  this is equivalent to the following system of polynomial equations
$$\left\{\begin{array}{l}
u^2v+u v^2+u+v- \left ( a+b+\frac{1}{a}+\frac{1}{b}\right ) uv =0\\
\\
u^2v^2+u^2+v^2+1- \left ( ab+\frac{1}{ab}+\frac{a}{b}+\frac{b}{a}\right ) uv =0.
\end{array}\right. $$
It is not difficult to show that the solution set of the above system is 
$$ \left\{ (a,b), (b,a^{-1}), (a^{-1},b^{-1}), (b^{-1},a), (b,a), (a,b^{-1}), (b^{-1},a^{-1}), (a^{-1},b) \right\}.$$
This set is equal to ${\cal O}_\psi(P) \cup {\cal O}_\psi(\sigma( P))$ and so   $\Sigma^{J}_{\mathbf{c}}=S_P\cup S_{\sigma(P)}$.

Note that if $P\in L$ then ${\cal O}_\psi(P) ={\cal O}_\psi(\sigma( P))$ and consequently $\Sigma^{J}_{\mathbf{c}}=S_P$.  If $P\notin L$ then ${\cal O}_\psi(P) \cap {\cal O}_\psi(\sigma( P))=\emptyset$ and so $\Sigma^{J}_{\mathbf{c}}$ is the disjoint  union of the  sets $S_P$ and $S_{\sigma(P)}$.

\end{proof}

\begin{remark}\label{remdP3}   For the case of the map \eqref{eq:dP3} note that  the globally 6-periodic map $\psi(x,y)=\left(y,\frac{y}{x}\right)$ admits the following first integrals
\begin{equation}\label{integrais3}
I_1=x+y+\frac{1}{x}+\frac{1}{y}+\frac{y}{x}+\frac{x}{y}, \quad I_2=x^2+y^2+\frac{1}{x ^2}+\frac{1}{y ^2}+\frac{x^2}{y^2}+\frac{y^2}{x^2}.
\end{equation}
These integrals can then be lifted by the map $\pi$ in  \eqref{dP3eq2}  to first integrals $J_1=I_1\circ\pi$, $J_2=I_2\circ\pi$ of  \eqref{eq:dP3}.  The relation  in Proposition~\ref{SSigma}   between the common level sets $\Sigma^{J}_{\mathbf{c}}$ and the invariant sets $S_P$ still holds in this case.  That is, $\Sigma^{J}_{\mathbf{c}}$  coincides with either the union of two orbits or  with just  one orbit of \eqref{eq:dP3}, the latter case occurring whenever $P$ belongs to the vanishing locus of the Jacobian of the first integrals  \eqref{integrais3}. The proof is completely analogous to the proof of Proposition~\ref{SSigma}. 
\end{remark}

\noindent {\emph{Acknowledgements}.   
The work of I. Cruz and  H. Mena-Matos was partially funded by the European Regional Development
Fund through the program COMPETE and by the Portuguese Government
through FCT - Funda{\c c}{\~a}o para a Ci{\^e}ncia e a Tecnologia - 
under the project PEst-C/MAT/UI0144/2013.

The work of  M. E. Sousa-Dias  was partially funded by FCT/Portugal through project PEst-OE/EEI/LA0009/2013}.

\small{

\end{document}